\title{Positivity for Gaussian Graphical Models}
\author{Jan Draisma}
\address{Department of Mathematics and Computer Science, Eindhoven
University of Technology; and Centrum voor Wiskunde en Informatica,
Amsterdam}
\thanks{JD was partially supported by a Vidi grant from the Netherlands
Organisation for Scientific Research (NWO).  SS was partially supported by the David and Lucille Packard Foundation
and the US National Science Foundation (DMS 0954865).  KT was partially supported by the US National Science Foundation (DMS 1004532)    }
\email{j.draisma@tue.nl}
\author{Seth Sullivant}
\address{Department of Mathematics \\
North Carolina State University, Raleigh, NC 27695}
\email{smsulli2@ncsu.edu}
\author{Kelli Talaska}
\address{Department of Mathematics \\
University of California, Berkeley, CA 94720}
\email{talaska@math.berkeley.edu}
\date{}
\theoremstyle{plain}
\newtheorem{thm}{Theorem}[section]
\newtheorem{lemma}[thm]{Lemma}
\newtheorem{prop}[thm]{Proposition}
\newtheorem{cor}[thm]{Corollary}
\theoremstyle{definition}
\newtheorem{defn}[thm]{Definition}
\newtheorem{ex}[thm]{Example}
\theoremstyle{remark}
\newcommand{\ZZ}{\mathbb{Z}}
\newcommand{\rr}{\mathbb{R}}
\newcommand{\bfT}{\mathbf{T}}
\newcommand{\bfP}{\mathbf{P}}
\newcommand{\bfF}{\mathbf{F}}
\newcommand{\bfC}{\mathbf{C}}
\newcommand{\calf}{\mathcal{F}}
\newcommand{\caln}{\mathcal{N}}
\newcommand{\calt}{\mathcal{T}}
\newcommand{\sgn}{{\rm sign} \,}
\newcommand{\bi}{\leftrightarrow}
\newcommand{\tto}{\twoheadrightarrow}
\newcommand{\ott}{\twoheadleftarrow}
\newcommand{\frs}{\mathfrak{S}}
\newcommand{\UD}{\mathrm{UD}}
\newcommand{\opp}{\mathrm{opp}}
\newcommand{\ind}{\mbox{$\perp \kern-5.5pt \perp$}}
\newcommand{\indsub}{{\mbox{\scriptsize$\perp \kern-4.5pt \perp$}}}
\begin{document}

\begin{abstract}
Gaussian graphical models are parametric statistical models for jointly normal random variables whose dependence structure is determined by a graph.  In previous work, we introduced trek separation, which gives a necessary and sufficient condition in terms of the graph for when a subdeterminant is zero for all covariance matrices that belong to the Gaussian graphical model.  Here we extend this result to give explicit cancellation-free formulas for the expansions of nonzero subdeterminants.

\end{abstract}

\maketitle


\section{Introduction} \label{sec:Introduction}

Gaussian graphical models are parametric statistical models for jointly
normal random variables whose dependence structure is determined by a graph.
In this work, we consider Gaussian graphical models on mixed graphs
which have both directed edges and bidirected edges.  In the causal
modeling framework for graphical models \cite{Pearl2000}, the directed
edges represent direct causal effects of one variable on another, while
the bidirected edges represent the effects of unobserved confounders that
lead to correlations between the error terms of the variables.
In other contexts, the Gaussian graphical models we study are known
as linear structural equation models, and the directed edges describe linear
relationships between variables with correlated errors.

A fundamental problem in the study of graphical models is to characterize
the distributions that can arise.  The models are typically presented
using parametric descriptions, and we wish to give conditions on the
distributions which could appear for some choice of parameters.
In the case of graphical models which only have directed edges without directed cycles
 (so-called \emph{directed acyclic
graphs} or DAGs), it is well-known that a probability distribution
belongs to the model if and only if the distribution satisfies
all the conditional independence constraints implied by the graph
\cite[Thm 3.27]{Lauritzen1996}.

For Gaussian graphical models, the model consists of all covariance
matrices $\Sigma \in PD_{m}$ which arise for some choice of parameters.  Each
conditional independence statement $X_{A} \ind X_{B} | X_{C}$, translates
to a rank condition on a submatrix of the covariance matrix $\Sigma$,
namely that the matrix $\Sigma_{A \cup C, B \cup C}$ has rank $\leq \#C$.
(Note that for sets $I$ and $J$, $\Sigma_{I,J}$ denotes the submatrix of $\Sigma$
with row index set $I$ and column index set $J$.) In other words,
all $ (\#C +1) \times (\#C +1)$ subdeterminants of $\Sigma_{A \cup C, B \cup C}$
are zero.

In previous work \cite{Sullivant2010}, we generalized the
rank dropping condition from conditional independence to arbitrary minors,
via \emph{trek separation}.  This gives
necessary and sufficient conditions in terms of the underlying graph
$G$, and sets $A$ and $B$, such that $\det \Sigma_{A,B} = 0$. These
vanishing determinantal constraints are among the most natural to study.
In the present paper, we extend the result of \cite{Sullivant2010} to
give explicit cancellation-free expansions for the non-vanishing determinants
$\det \Sigma_{A,B}$.
This has potential applications to the algebraic study of graphical models
including to the identifiability problem for Gaussian
graphical models \cite{Drton2010, Foygel2011}.
Besides this, our formulas for $\det \Sigma_{A,B}$ involve elegant combinatorics
in the spirit of classical enumerative combinatorics results on determinants
of matrices associated to graphs \cite{Gessel1985,Lindstrom1973}.
For example, our results give the
expansion of $\det \Sigma_{A,B}$ in terms of  \emph{nonintersecting trek flows},
and we show that after collecting terms, every monomial in the expansion
appears with a coefficient of the form $\pm 2^{c}$ for some $c$, determined
by combinatorial properties of the flow.

This paper is organized as follows.  In the next section, we give
an introduction to Gaussian graphical models and the special ``paths" in
mixed graphs we need to study, called \emph{treks}.  In Section \ref{sec:Results}
we describe our main results on the expansions of the determinants
of submatrices of $\Sigma$ arising from a Gaussian graphical model.
These results are based on a generalization of the Lindstr\"om-Gessel-Viennot
Lemma for graphs with cycles, which is described in Section \ref{sec:Kelli}.
Finally, Section \ref{sec:Proofs} contains the proofs of the most general
statements from Section \ref{sec:Results} using the methods of Section \ref{sec:Kelli}.


\section{Gaussian Graphical Models} \label{sec:Gaussian}

Let $G = (V,B,D)$ be a mixed graph with vertex set $V$, bidirected edge
set $B$, and directed edge set $D$. Thus $(V,D)$ is a directed graph
without loops or multiple edges called the {\em directed part} of $G$,
and $(V,B)$ is an undirected graph without loops or multiple edges called
the {\em bidirected part} of $G$. The qualification {\em bidirected}
stems from these edges' statistical interpretation in graphical models.
A bidirected edge between vertices $i$ and $j$ is denoted $i \bi j$ and
a directed edge from $i$ to $j$ is denoted $i \to j$.    Let $PD_{V}$
denote the set of $\#V \times \#V$ real symmetric positive definite
matrices.  Let $PD(B)$ denote the set of positive definite matrices with off-diagonal nonzero
entries only in positions corresponding to bidirected edges of $G$, i.e.
$$
PD(B)  =  \{  \Omega = (\omega_{ij}) \in PD_{V} :  \omega_{ij} = 0 \mbox{
if } i \neq j \mbox{ and } i \bi j \notin B \}.
$$

Let $\epsilon \in \rr^{V}$ be a jointly normal random vector $\epsilon
\sim \caln(0, \Omega)$ with $\Omega \in PD(B)$.  For each $i \to j \in
D$, let $\lambda_{ij} \in \rr$ be a real parameter.  Define a new random
vector $X \in \rr^{V}$ by
\begin{equation} \label{eq:linear}
X_{j} =  \sum_{i : i \to j \in D} \lambda_{ij} X_{i}  + \epsilon_{j}.
\end{equation}
Let $\Lambda$ be the $\#V \times \#V$ matrix with $\lambda_{ij}$ in
the $ij$ position if $i \to j \in D$, and zero otherwise.  Equation
(\ref{eq:linear}) can be rearranged as $\epsilon = (I - \Lambda)^{T} X$,
where $I$ is a $\#V \times \#V$ identity matrix.  From this, we deduce
that $X \sim \caln(0, \Sigma)$ where
$$
\Sigma = (I- \Lambda)^{-T} \Omega (I - \Lambda)^{-1}
$$
provided that the matrix $I - \Lambda$ is invertible.

Given $A, B \subset V$, let $\Sigma_{A,B} = ( \sigma_{a,b})_{a \in
A, b \in B}$ be the submatrix of $\Sigma$ with row index set $A$
and column index set $B$.  Suppose $\#A = \#B$.  In previous work
\cite{Sullivant2010}, we gave a combinatorial condition for when
$\det \Sigma_{A,B} = 0$.  In this paper, we explain how to expand $\det
\Sigma_{A,B}$ explicitly in monomials without cancellation.  Henceforth,
rather than thinking about the $\lambda_{ij}$ and $\omega_{ij}$ as
parameters, we choose to think about them as indeterminates (or polynomial
variables).  Hence, we are interested in expanding $\det \Sigma_{A,B}$
as a polynomial or a rational function, depending on the context.

One of the key combinatorial definitions we need is the definition of
a trek.

\begin{defn}
A {\em trek} in $G$ from $i \in V$ to $j \in V$ is a pair $(P_L,P_R)$,
where $P_L$ is a directed path from some vertex $s$ to $i$ and $P_R$
is a directed path from some vertex $t$ to $j$, satisfying the further
requirement that if $s \neq t$, then there must be a bidirected edge
$s \bi t \in B$. The vertices $i$ and $j$ are called the {\em initial vertex}
and the {\em final vertex} of the trek, respectively.
\end{defn}

In this definition, a path is an ordered sequence of edges in $D$ in
which the head of each edge equals the tail of the next edge. A path may have no edges, in which case, we also specify where the path starts, which is also where it ends; we call such a path an empty path.  Later in the paper we will work mostly with
self-avoiding paths, in which each vertex is the head of at most one
edge of the path and the tail of at most one edge of the path; such
a path is uniquely determined by its edge set (and initial vertex
when empty). For the moment, however, $P_L$ and $P_R$ need not be
self-avoiding.  Note also that either (or both) of the paths $P_{L}$ and
$P_{R}$ are allowed to be an empty path, in which case $s=i$ or $t=j$,
respectively (or both). Two treks $T = (P_{L}, P_{R})$ and $T' = (P_L',
P_R')$ are the same if both $P_L=P_L'$ and $P_R=P_R'$. In particular,
two treks might contains exactly the same set of edges (even with the
same multiplicities) but be different treks.

If all directed edges in $G$ are pointing down the page---which they can
be arranged to do if the directed part of $G$ happens to be acyclic---then
one can think of a trek between $i$ and $j$ as a path of edges that
starts at $i$, goes up for a time to $s$, possibly traverses a single
bidirected edge to a vertex $t$, and then turns around and goes down for
a time to $j$; see Figure~\ref{fig:trekswap1} below. We stress, however,
that our results below are not restricted to the case where the directed
part of $G$ is acyclic.

To any trek $T=(P_L,P_R)$, we associate the {\em trek monomial} $m_{T}
= \lambda^{L}\omega_{st}\lambda^{R}$, where $\lambda^{L} =  \prod_{k
\to l \in P_L} \lambda_{kl}$, and similarly,  $\lambda^{R} =  \prod_{k
\to l \in P_R} \lambda_{kl}$.  Here $s,t$ are the initial vertices of
$P_L,P_R$, and a variable $\lambda_{kl}$ appears in $\lambda^L$ and $\lambda^R$
with its respective multiplicities in the paths $P_L$ and $P_R$.  The reason
for introducing treks and trek monomials is the following result:

\begin{prop}\label{prop:trekexpand}
Let $G = (V,B,D)$ be a graph, and $\Sigma = (I - \Lambda)^{-T} \Omega
(I - \Lambda)^{-1}$.  Then for all $i,j \in V$,
$$
\sigma_{ij}  = \sum_{T} m_{T},
$$
where the sum runs over all treks $T$ in $G$ from $i$ to $j$.
\end{prop}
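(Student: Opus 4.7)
The plan is to start from the definition $\Sigma = (I-\Lambda)^{-T}\Omega(I-\Lambda)^{-1}$ and expand the inverse $(I-\Lambda)^{-1}$ as the formal geometric series $\sum_{k\ge 0}\Lambda^{k}$. Working in the ring of formal power series in the indeterminates $\lambda_{ij}$ (with coefficients in the polynomial ring generated by the $\omega_{ij}$), this expansion is well-defined even when the directed part of $G$ contains cycles, since $\Lambda$ has no constant term and each power $\Lambda^{k}$ contributes only to degree-$k$ and higher terms. For any pair of vertices $u,v$, I want to identify $((I-\Lambda)^{-1})_{u,v}$ with the weighted sum over all directed paths from $u$ to $v$; this is simply because $(\Lambda^{k})_{u,v}=\sum_{u=v_{0}\to v_{1}\to\cdots\to v_{k}=v}\lambda_{v_{0}v_{1}}\cdots\lambda_{v_{k-1}v_{k}}$, ranging over length-$k$ walks, and empty paths are handled by the $k=0$ term (which gives the identity matrix, i.e.\ the convention that the empty path from $u$ to $u$ has weight $1$).

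Next I would write out the matrix product in coordinates:
\[
\sigma_{ij} \;=\; \sum_{s,t\in V} \bigl((I-\Lambda)^{-1}\bigr)_{s,i}\,\omega_{st}\,\bigl((I-\Lambda)^{-1}\bigr)_{t,j},
\]
using $(M^{T})_{i,s}=M_{s,i}$. Substituting the path expansion from the previous step, each term $((I-\Lambda)^{-1})_{s,i}$ becomes $\sum_{P_{L}}\lambda^{P_{L}}$ over directed paths $P_{L}$ from $s$ to $i$, and similarly $((I-\Lambda)^{-1})_{t,j}=\sum_{P_{R}}\lambda^{P_{R}}$. Distributing, $\sigma_{ij}$ becomes a sum over quadruples $(s,t,P_{L},P_{R})$ of the monomial $\lambda^{P_{L}}\omega_{st}\lambda^{P_{R}}$.

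Finally, I need to recognize this sum as the sum over treks. The key point is the support of $\Omega$: by definition of $PD(B)$, the entry $\omega_{st}$ is nonzero (as an indeterminate) exactly when either $s=t$ or $s\bi t\in B$. These are precisely the two cases allowed in the definition of a trek: either the two halves share the same top vertex, or they are connected by a bidirected edge. So a nonzero quadruple $(s,t,P_{L},P_{R})$ with $P_{L}$ a directed path from $s$ to $i$ and $P_{R}$ a directed path from $t$ to $j$ is exactly a trek $T=(P_{L},P_{R})$ from $i$ to $j$, with associated monomial $m_{T}=\lambda^{P_{L}}\omega_{st}\lambda^{P_{R}}$. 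This identification is bijective by the definition of trek equality (treks are indexed by ordered pairs of paths), so the formal power series identity $\sigma_{ij}=\sum_{T}m_{T}$ drops out.

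The only delicate point is the handling of the infinite sum when $G$ has directed cycles: I would remark that the identity should be read inside the formal power series ring $\mathbb{R}[\omega_{ij}][[\lambda_{ij}]]$, where every monomial in a trek monomial is well-defined and appears with a nonnegative coefficient, so no cancellation or convergence issue arises. Once the identity holds there, it holds as an identity of rational functions wherever $I-\Lambda$ is invertible. I do not expect any serious obstacle in this proof; the main thing to be careful about is the bookkeeping of empty paths and the $s=t$ case, which correspond to trek endpoints sitting at the top of a trivial half.
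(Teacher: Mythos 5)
Your proposal is correct and follows essentially the same route as the paper, which proves this proposition by expanding $(I-\Lambda)^{-1}$ as the geometric series $I+\Lambda+\Lambda^2+\cdots$ and interpreting the entries of $\Lambda^r$ as weighted sums over directed paths (not necessarily self-avoiding) of length $r$, then matching the surviving quadruples $(s,t,P_L,P_R)$ with treks via the support of $\Omega$. Your additional care about the formal power series ring and the empty-path/$s=t$ bookkeeping is consistent with the paper's conventions and fills in details the paper delegates to the earlier reference.
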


Proposition \ref{prop:trekexpand} can be proven use the expansion
$$(I- \Lambda)^{-1} =  I + \Lambda + \Lambda^{2} + \cdots ,$$
and the combinatorial interpretation of the $kl$ entry of  $\Lambda^{r}$
as the sum of $\lambda^{P}$ over all directed paths $P$ of length $r$
from $k$ to $l$ in the graph $(V,D)$.  See \cite{Sullivant2010}.

\begin{figure}
\includegraphics[]{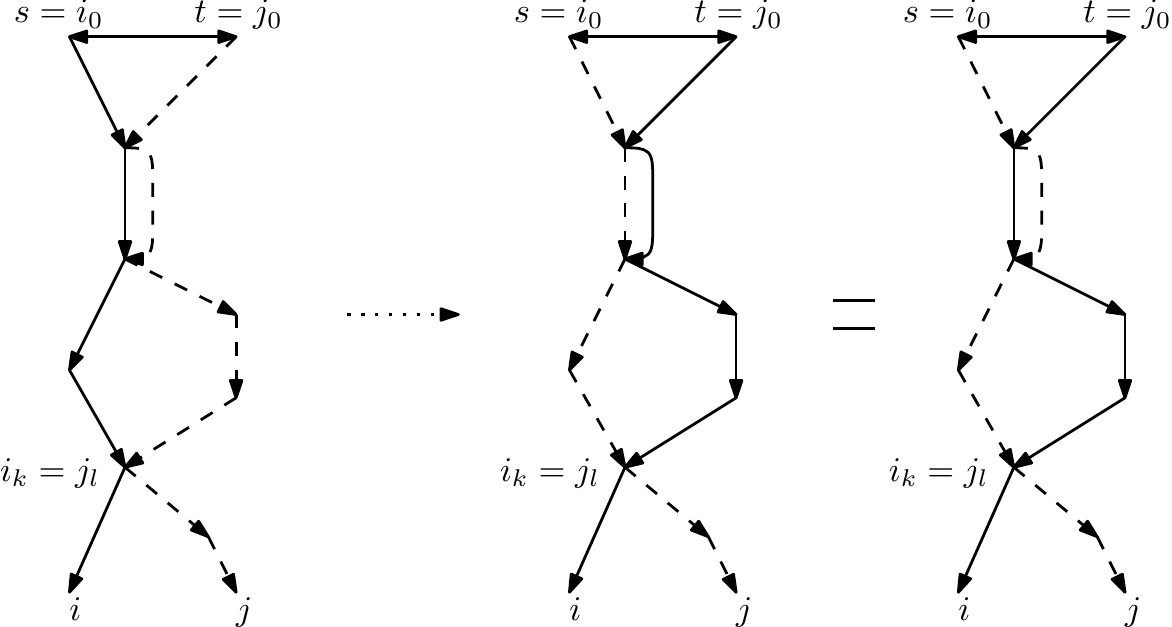}
\caption{Tailswapping at $(k,l)$ yields a trek with the same trek
monomial. The two treks on the right are the same. The number of treks from $i$ to $j$
with this trek system monomial equals
$2^{i(T)-e(T)}=2^{3-1}=4$.}
\label{fig:trekswap1}
\end{figure}

As stated previously, there might be many treks in $\calt(i,j)$ that
use exactly the same set of edges, so the expression for $\sigma_{ij}$
might have repeated terms.  As a prelude to
our main results, we discuss how to simplify this expression in the case
where $(V,D)$ is acyclic. Let $T=(P_L,P_R)$ be a trek with $P_L=(s=i_0
\to i_1 \to \ldots \to i_m=i)$ and $P_R=(t=j_0 \to j_1 \to \ldots \to
j_n=j)$. Since $(V,D)$ is acyclic, the $i_k$ are mutually distinct,
and so are the $j_l$. But it may happen that $i_k$ equals $j_l$, and
then $T':=(P_L',P_R')$ with $P_L':t=j_0 \to \ldots \to j_l \to i_{k+1}
\to \ldots \to i_m=i$ and $P_R':s=i_0 \to \ldots \to i_k \to j_{l+1}
\to \ldots \to j_n=j$ is a trek with $m_{T'}=m_T$. We call this procedure
{\em tailswapping} at $(k,l)$; see Figure~\ref{fig:trekswap1}. It turns
out that any trek $T'$ with $m_T=m_{T'}$ can be obtained from $T$ by
repeated tailswapping.  We call such treks {\em equivalent} to $T$,
and we write $T \sim T'$.  If $k=l=0$ (so that $s$ equals $t$), then
then clearly $T=T'$.  Similarly, if $k,l>1$ and $i_{k-1}=j_{l-1}$,
then tailswapping at $(k-1,l-1)$ gives the same result as tailswapping
at $(k,l)$. Denote by $i(T)$ the number of pairs $(k,l) \neq (0,0)$
with $i_k=j_l$ and by $e(T)$ the number of edges that $P_L$ and $P_R$
have in common.  The above discussion, together with the observation that
tailswapping at distinct pairs of indices commutes then yields the
following proposition.

\begin{prop} \label{prop:2n1}
Let $G = (V,B,D)$ be a mixed graph such that $(V,D)$ is directed acyclic,
and let $\Sigma = (I - \Lambda)^{-T} \Omega (I - \Lambda)^{-1}$. Then for
all $i,j \in V$,
$$
\sigma_{ij}  = \sum_{[T]_\sim} 2^{i(T)-e(T)}m_{T}.
$$
where the sum runs over equivalence classes of treks.
\end{prop}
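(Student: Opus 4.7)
The plan is to start from Proposition \ref{prop:trekexpand}, $\sigma_{ij} = \sum_T m_T$, and collect terms in each equivalence class under $\sim$. Any tailswap $\tau_{k,l}$ leaves the combined edge multiset of $P_L \cup P_R$ invariant and only permutes the (unordered) initial pair $\{s,t\}$, for which $\omega_{st}=\omega_{ts}$; hence every trek in $[T]_\sim$ has the same trek monomial $m_T$, and
$$\sigma_{ij} \;=\; \sum_{[T]_\sim} \#[T]_\sim \cdot m_T.$$
It therefore suffices to prove $\#[T]_\sim = 2^{i(T)-e(T)}$.

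The structural facts I would establish about tailswap operators are: (i) each $\tau_{k,l}$ is an involution; (ii) any two $\tau_{k,l}$ and $\tau_{k',l'}$ commute; and (iii) whenever both $(k,l)$ and $(k-1,l-1)$ satisfy $i_k=j_l$ and $i_{k-1}=j_{l-1}$ (so that the edge $i_{k-1}\to i_k$ is shared between $P_L$ and $P_R$), one has $\tau_{k,l}=\tau_{k-1,l-1}$, where we set $\tau_{0,0}=\mathrm{id}$. Each of these is a direct inspection of how a tailswap reassigns the vertex sequences, relying crucially on the acyclicity of $(V,D)$, which forces the $i_k$ (resp.\ $j_l$) to be pairwise distinct, so the ``matching pairs'' $(k,l)$ with $i_k=j_l$ form an unambiguous finite set.

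From (i)--(iii) the tailswaps generate an elementary abelian $2$-group $G_T$ that acts on $[T]_\sim$, transitively by the definition of $\sim$. There are $i(T)$ generators $\tau_{k,l}$ (for $(k,l)\neq(0,0)$), and each of the $e(T)$ shared edges of $P_L$ and $P_R$ contributes a single relation among them: a nontrivial identification $\tau_{k,l}=\tau_{k-1,l-1}$ for an interior shared edge, or the simplification $\tau_{1,1}=\mathrm{id}$ when the first edges of $P_L$ and $P_R$ agree and $s=t$. This gives the upper bound $|G_T| \le 2^{i(T)-e(T)}$.

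The main obstacle, and what I would attack last, is freeness of the $G_T$-action on $[T]_\sim$, i.e., that the relations listed above are complete. I would prove this by reconstructing, from any $T'\in[T]_\sim$, a canonical subset $S \subseteq \{(k,l)\neq(0,0) : i_k = j_l\}$ of ``active'' tailswap indices taking $T$ to $T'$: walking along $P_L'$ and comparing with $P_L$, each index at which $P_L'$ switches between segments of $P_L$ and segments of $P_R$ records one element of $S$, with the only remaining ambiguity being exactly the shared-edge identifications in (iii). Once freeness is established, $\#[T]_\sim = |G_T| = 2^{i(T)-e(T)}$ and the proposition follows.
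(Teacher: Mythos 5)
Your proposal is correct and follows essentially the same route as the paper, which establishes the formula via exactly this tailswapping discussion: the $i(T)$ matching pairs give commuting involutions, the $e(T)$ shared edges give identifications $\tau_{k,l}=\tau_{k-1,l-1}$ (with $\tau_{0,0}=\mathrm{id}$), and the class size is $2^{i(T)-e(T)}$. If anything, you supply more detail than the paper does, since the paper leaves the transitivity and freeness of the action (your final paragraph) as part of an informal ``above discussion.''
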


We will greatly generalize Proposition \ref{prop:2n1} in Section
\ref{sec:Results}.  In particular, the fact that all coefficients in $\det
\Sigma_{A,B}$ are powers of $2$ persists when $(V,D)$ is a directed
acyclic graph.

Now we return to the general case, where $(V,D)$ may contain cycles. Since
each entry $\sigma_{ij}$ is the sum of trek monomials from $i$ to $j$,
the determinant $\det \Sigma_{A,B}$ will be a (signed) sum of products
of trek monomials.  Indeed, expanding the determinant using the standard
Leibniz expansion yields the following:

\begin{prop} \label{prop:DetSigmaEasy}
Let $A, B \subseteq V$, with $\#A = \#B = k$.  Let $A = \{a_{1}, \ldots, a_{k} \}$ and $B = \{b_{1}, \ldots, b_{k}\}$.  Then
$$
\det \Sigma_{A,B}  =  \sum_{\pi \in \frs_{k}} \left(\sum_{T_{1} \in \calt(a_{1}, b_{\pi(1)}), \ldots, T_{k} \in \calt(a_{k}, b_{\pi(k)})} \sgn(\pi) \cdot m_{T_{1}} \cdots m_{T_{k}}\right).
$$
\end{prop}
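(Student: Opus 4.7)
The plan is to derive the formula in two steps: first apply the Leibniz expansion of a determinant, and then substitute the per-entry trek expansion from Proposition~\ref{prop:trekexpand} into each of the resulting factors. Concretely, I would start from
$$
\det \Sigma_{A,B} = \sum_{\pi \in \frs_k} \sgn(\pi) \prod_{i=1}^{k} \sigma_{a_i, b_{\pi(i)}},
$$
which is valid for the submatrix $\Sigma_{A,B}$ with rows indexed by $A=\{a_1,\ldots,a_k\}$ and columns by $B=\{b_1,\ldots,b_k\}$.

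Next, for each pair $(a_i, b_{\pi(i)})$ I would invoke Proposition~\ref{prop:trekexpand} to write
$$
\sigma_{a_i, b_{\pi(i)}} = \sum_{T_i \in \calt(a_i, b_{\pi(i)})} m_{T_i}.
$$
Substituting these expressions into the Leibniz formula and distributing the product of the $k$ sums over the index sets $\calt(a_i, b_{\pi(i)})$ yields exactly
$$
\det \Sigma_{A,B} = \sum_{\pi \in \frs_k} \sum_{\substack{T_1 \in \calt(a_1, b_{\pi(1)}) \\ \vdots \\ T_k \in \calt(a_k, b_{\pi(k)})}} \sgn(\pi)\, m_{T_1} \cdots m_{T_k},
$$
which is the claimed identity.

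The only subtle point is that when $(V,D)$ contains directed cycles the sum $\sum_T m_T$ for a single entry $\sigma_{ij}$ may be infinite, so one must be careful about interchanging sums and products. I would handle this by working formally in the ring of power series in the indeterminates $\lambda_{ij}$ and $\omega_{ij}$, in which each trek monomial is a well-defined element and each fixed monomial in these variables arises from only finitely many tuples of treks; this makes the distributive law legitimate and the identity an equality of formal power series. Since the entries of $\Sigma = (I-\Lambda)^{-T}\Omega(I-\Lambda)^{-1}$ live in this ring and the Leibniz formula is purely algebraic, no analytic issue arises. This step is essentially bookkeeping, and I do not expect any genuine obstacle beyond it.
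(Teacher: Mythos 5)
Your proposal is correct and matches the paper's own (very brief) justification: the paper likewise obtains this identity by applying the Leibniz expansion of the determinant and substituting the entrywise trek expansion $\sigma_{ij}=\sum_T m_T$ from Proposition~\ref{prop:trekexpand}. Your added remark about working in the ring of formal power series when $(V,D)$ has cycles is a sensible clarification consistent with how the paper treats the cyclic case.
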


There are two issues with this expansion. The first is that distinct
tuples $(\pi,T_1,\ldots,T_k)$ can lead to the same monomial $m_{T_1}
\cdots m_{T_k}$ and thus cancellation can occur. The second is that when
the directed part of $G$ contains directed cycles, 
this expansion is a formal power
series rather than a polynomial (of course, this already happens when $A$
and $B$ are singletons). In the general case our aim is to write this
formal power series as a rational function in which the denominator and
the numerator are both in a suitable sense cancellation-free.

An important simplification that we can make when describing the graphs,
matrices, and determinants is to eliminate the bidirected edges under
consideration.  Associated to the graph $G = (V,B,D)$, we introduce
the bidirected subdivision $\tilde{G}$, which has only
directed edges.  We start will all vertices and directed edges in $G$.
Then, for each bidirected edge $i \bi j$ in $G$, we introduce a new
vertex $(i,j)$, and two directed edges $(i,j) \to i$ and $(i,j) \to j$.
The resulting graph is the bidirected subdivision of $G$.  Combinatorial
expressions for the entries $\sigma_{ij}$ associated to the graph $G$
can be obtained from the corresponding expressions for $\tilde{G}$ as
described in the following proposition.

\begin{prop}
Let $G = (V,B,D)$ be a graph and let $\tilde{G}$ be its bidirected
subdivision.  Write $\Sigma = (I - \Lambda)^{-T} \Omega (I - \Lambda)^{-1}$
for the covariance matrix for  the graph $G$ and
$\Sigma^{*} = (I - \Lambda^{*})^{-T} \Omega^{*} (I - \Lambda^{*})^{-1}$
for the covariance matrix for the bidirected subdivision $\tilde{G}$.
 Let $f \in \rr[\sigma]$ be a polynomial.
Then
$f(\sigma(\lambda, \omega)) = 0$ if and only if
$f(\sigma^{*}(\lambda^{*}, \omega^{*})) = 0$.

Furthermore,
the expansion of $f(\sigma(\lambda, \omega))$ can be recovered from the expansion
of $f(\sigma^{*}(\lambda^{*}, \omega^{*}))$ by the following procedure:
\begin{enumerate}
\item  Remove any monomial that contains a $(\lambda^{*}_{(i,j),i})^{2}$ or
$(\lambda^{*}_{(i,j),j})^{2}$.
\item Set all remaining $\lambda^{*}_{(i,j),i}$ and $\lambda^{*}_{(i,j),j}$
variables to $1$.
\item Change $\omega^{*}_{(i,j),(i,j)}$ to $\omega_{ij}$.
\item Change all remaining $\lambda^{*}_{ij}$ and $\omega^{*}_{ii}$ to
$\lambda_{ij}$ and $\omega_{ii}$, respectively.
\end{enumerate}
\end{prop}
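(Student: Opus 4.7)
The plan is to reduce the statement to a comparison of the rings $\rr[\lambda,\omega]$ and $\rr[\lambda^*,\omega^*]$. First I would compute $\Sigma^*$ restricted to the original vertex set $V$. Writing $W := \tilde V \setminus V$ for the subdivision vertices and noting that each $(s,t)\in W$ is a source with the two outgoing edges $(s,t)\to s,(s,t)\to t$ (and no incoming edges), the matrices $\Lambda^*$ and $\Omega^*$ decompose cleanly in the $(V,W)$-block ordering and a short block inversion yields
$$\Sigma^*_{V,V} \;=\; (I - \Lambda^*_{V,V})^{-T}\Bigl[\,\Omega^*_{V,V} + (\Lambda^*_{W,V})^T \Omega^*_{W,W}\Lambda^*_{W,V}\,\Bigr](I - \Lambda^*_{V,V})^{-1}.$$
This identity suggests the ring homomorphism $\psi\colon\rr[\lambda,\omega]\to\rr[\lambda^*,\omega^*]$ defined by $\psi(\lambda_{ij}) = \lambda^*_{ij}$, $\psi(\omega_{ij}) = \lambda^*_{(i,j),i}\lambda^*_{(i,j),j}\omega^*_{(i,j),(i,j)}$ for $i\bi j\in B$, and $\psi(\omega_{ii}) = \omega^*_{ii} + \sum_{t\colon i\bi t\in B}(\lambda^*_{(i,t),i})^2\omega^*_{(i,t),(i,t)}$. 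The block formula yields $\psi(\Sigma) = \Sigma^*_{V,V}$ entrywise, hence $f(\sigma^*) = \psi(f(\sigma))$ for every polynomial $f$.

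For the first assertion it then suffices to show $\psi$ is injective. I would exhibit the left inverse $\pi\colon\rr[\lambda^*,\omega^*]\to\rr[\lambda,\omega]$ given by the specialization $\lambda^*_{ij}\mapsto\lambda_{ij}$, $\lambda^*_{(i,j),v}\mapsto 1$ for $v\in\{i,j\}$, $\omega^*_{(i,j),(i,j)}\mapsto\omega_{ij}$, and $\omega^*_{ii}\mapsto \omega_{ii} - \sum_{t\colon i\bi t\in B}\omega_{it}$. A direct computation $\pi(\psi(\omega_{ii})) = (\omega_{ii} - \sum_{t}\omega_{it}) + \sum_{t} 1\cdot\omega_{it} = \omega_{ii}$ (and similarly for the other generators) confirms $\pi\circ\psi = \mathrm{id}$, proving injectivity and hence $f(\sigma)=0\Leftrightarrow f(\sigma^*)=0$.

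For the procedure, I would expand $\sigma^*_{ij}$ via Proposition~\ref{prop:trekexpand} and classify treks $T^*=(P_L^*,P_R^*)$ in $\tilde G$ from $i$ to $j$ by their common source. If the source lies in $V$, the trek stays in the original subgraph and matches a trek in $G$ with $s=t$, its monomial carrying $\omega^*_{uu}$ and no subdivision-edge variable. If the source is $(s,t)\in W$, then because $(s,t)$ has no incoming edges each of $P_L^*,P_R^*$ begins with one of $(s,t)\to s,(s,t)\to t$; the two \emph{non-degenerate} sub-cases (opposite initial edges) are in bijection with treks in $G$ traversing the bidirected edge $s\bi t$ and carry the factor $\lambda^*_{(s,t),s}\lambda^*_{(s,t),t}\omega^*_{(s,t),(s,t)}$, while the two \emph{degenerate} sub-cases (same initial edge) produce a factor $(\lambda^*_{(s,t),s})^2$ or $(\lambda^*_{(s,t),t})^2$ inside the trek monomial. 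Since a single path uses the edge $(s,t)\to v$ at most once, such squared factors appear in no other trek class; hence at the level of collected monomials in $\sigma^*_{ij}$, step~1 removes exactly the degenerate-class contributions. Steps~2--4 then rename variables so that each surviving $m_{T^*}$ becomes $m_T$ for the corresponding trek $T$ in $G$; applied entrywise this transforms $\sigma^*_{ij}$ into $\sigma_{ij}$, and substituting these into $f$ recovers $f(\sigma)$. The main obstacle is precisely this combinatorial bookkeeping---verifying that the four trek classes are distinguishable by the $\lambda^*_{(s,t),v}$-exponent pattern, so that step~1 is unambiguous---while everything else is routine block matrix calculation and variable substitution.
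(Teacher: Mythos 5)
Your proof is correct, and its two halves relate differently to the paper's. The first half is essentially the paper's own argument: the same block inversion of $I-\Lambda^*$ and the same homomorphism (the paper's $\phi$, your $\psi$) sending $\omega_{ii}\mapsto \omega^*_{ii}+\sum_{t}(\lambda^*_{(i,t),i})^2\omega^*_{(i,t),(i,t)}$ and $\omega_{ij}\mapsto\lambda^*_{(i,j),i}\omega^*_{(i,j),(i,j)}\lambda^*_{(i,j),j}$; your injectivity argument is cleaner, since you exhibit the explicit left inverse (which is exactly the substitution the paper calls $\psi$ and introduces only for the second claim), whereas the paper argues injectivity by tracking which variables can occur in an element of the kernel. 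For the second half you genuinely diverge: the paper deduces the four-step procedure algebraically, by applying its $\psi$ to the factorization $(I-\Lambda)^{-T}(\Omega^*_2+E^T\Omega^*_1E)(I-\Lambda)^{-1}$ and asserting that the $(\lambda^*_{(i,j),i})^2\omega^*_{(i,j),(i,j)}$ terms cancel against $\psi(\omega^*_{ii})=\omega_{ii}-\sum_j\omega_{ij}$, while you classify the treks of $\tilde G$ by their source and identify the degenerate classes as exactly those whose trek monomials contain a squared subdivision-edge variable; your route buys an explicit explanation of why step (1) is unambiguous on each entry $\sigma^*_{ij}$. One caveat worth making explicit, since it affects the statement as much as either proof: your argument (correctly) applies the procedure entrywise to each $\sigma^*_{ij}$ and only then substitutes into $f$. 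Applied literally to the collected expansion of $f(\sigma^*)$ for $f$ nonlinear in the $\sigma$'s, step (1) can erase legitimate terms in which a square $(\lambda^*_{(i,j),i})^2$ arises as a product of two non-degenerate trek monomials; for instance, on the graph with a single bidirected edge $1\bi 2$ and $f=\sigma_{12}^2$ one gets $f(\sigma^*)=(\lambda^*_{(1,2),1})^2(\omega^*_{(1,2),(1,2)})^2(\lambda^*_{(1,2),2})^2$, which step (1) would delete even though $f(\sigma)=\omega_{12}^2\neq 0$. So the entrywise reading you adopt is the one under which the proposition holds, and your write-up should state that this is how the procedure is being interpreted.
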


\begin{proof}
Consider the $\rr$-algebra homomorphism
$\phi: \rr[[\lambda, \omega]] \rightarrow \rr[[\lambda^{*}, \omega^{*}]]$ defined by
$$ \phi(\lambda_{ij}) = \lambda^{*}_{ij}, \quad
\phi(\omega_{ii})  = \omega^{*}_{ii}  + \sum_{j: i \bi j \in B} (\lambda^{*}_{(i,j) i})^{2} \omega^{*}_{(i,j)(i,j)}, \quad
\phi(\omega_{ij}) = \lambda^{*}_{(i,j),i} \omega^{*}_{(i,j),(i,j)} \lambda^{*}_{(i,j),j}.
$$
We claim that $\sigma^{*}_{ij} = \phi(\sigma_{ij})$.

First, we prove the claim.  Let $\Lambda^{*}$ and $\Omega^{*}$ be the matrices for
the bidirected subdivision $\hat{G}$.  We realize both matrices as block matrices
with the first set of row/column labels corresponding to bidirected edges $i \bi j$,
and hence the vertices $(i,j)$ in $\hat{G}$.  Then $\Lambda^{*}$ and $\Omega^{*}$ have
the form
$$
\Lambda^{*} = \begin{pmatrix}
0  &  E  \\
0  &  \Lambda
\end{pmatrix}
 \quad \quad
\Omega^{*} =
\begin{pmatrix}
\Omega^{*}_{1} & 0 \\
0 & \Omega^{*}_{2}
\end{pmatrix}
$$
where $\Omega^{*}_{1} = diag( (\omega_{(i,j),(i,j)})_{i \bi j \in B} )$,
$\Omega^{*}_{2} = diag( (\omega_{ii})_{i \in V} )$, $\Lambda$ is the
directed edge matrix from
the  graph $G$, and $E$ is a matrix whose $(i,j), k$ entry is $\lambda_{(i,j),k}$ if
$k \in \{i,j\}$, and is zero otherwise.  Note that
$$(I - \Lambda^{*})^{-1}
=  \begin{pmatrix}
I &  (I-\Lambda)^{-1} E \\
0 & (I - \Lambda)^{-1}
\end{pmatrix}
$$
from which we deduce that
$$
(I - \Lambda^{*})^{-T} \Omega^{*} (I - \Lambda^{*})^{-1}  =
\begin{pmatrix}
\Omega^{*}_{1}  &  \Omega^{*}_{1} E (I- \Lambda)^{-1}  \\
(I- \Lambda)^{-T} E^{T} \Omega^{*}_{1}  &
(I- \Lambda)^{-T} (\Omega^{*}_{2} + E^{T} \Omega^{*}_{1} E) (I- \Lambda)^{-1}
\end{pmatrix}.
$$
The entry in the bottom right-hand corner of this matrix is the expression
for the submatrix $\Sigma^{*}_{V,V}$, so the claim is equivalent
to saying that the map $\phi$ is that map that takes $\Omega$ in
$(I- \Lambda)^{-T} \Omega (I- \Lambda)^{-1}$ and replaces it with
$\Gamma = \Omega^{*}_{2} + E^{T} \Omega^{*}_{1} E$.  But it is easy to see that
$\gamma_{ii} = \phi(\omega_{ii})$ and for $i \neq j$,
 $\gamma_{ij} = 0$ if $i \bi j \notin B$ and $\gamma_{ij} = \phi(\omega_{ij})$
 if $i \bi j \in B$.

Now we will show that the claim proves the proposition.
First of all, the map $\phi$ is injective.  Indeed, suppose that $\phi(f) = 0$ and
$f$ is irreducible.
Then, since $\omega^{*}_{ii}$ only appears in $\phi(\omega_{ii})$, then
$\omega_{ii}$ could not appear in $f$.  Similarly, we can then rule out
$\omega_{ij}$ appearing in $f$, and $\lambda_{ij}$ appearing in $f$.  This forces
$f$ to be the zero polynomial.  Since $\phi$ is injective, we deduce the first part
of the proposition, a power series  $f(\sigma(\lambda, \omega))$ is zero
if and only if $\phi(f) = f(\sigma^{*}(\lambda^{*}, \omega^{*}))$ is zero.

For the second part, we note that the following map $\psi$ applied to
$f(\sigma^{*}(\lambda^{*}, \omega^{*}))$ produces $f(\sigma(\lambda, \omega))$:
$$
\psi(\lambda^{*}_{ij}) = \lambda_{ij}, \quad  \psi(\omega^{*}_{ii}) = \omega_{ii}
-  \sum_{j: i \bi j \in B} \omega_{ij}, \quad  \psi(\omega^{*}_{(i,j), (i,j)} =
\omega_{ij}, \quad  \psi( \lambda^{*}_{(i,j),i} ) = 1.
$$
That can be seen by applying the operations on the factorization
$(I- \Lambda)^{-T} (\Omega^{*}_{2} + E^{T} \Omega^{*}_{1} E) (I- \Lambda)^{-1}$
to produce $(I- \Lambda)^{-T} \Omega (I- \Lambda)^{-1}$.  This corresponds
to the four-step procedure from the statement of the proposition,
since the expressions involving $(\lambda^{*}_{(i,j), i})^{2} \omega^{*}_{(i,j),(i,j)}$
will cancel with terms coming from $\psi(\omega^{*}_{ii}) = \omega_{ii}
-  \sum_{j: i \bi j \in B} \omega_{ij}$.
\end{proof}

Henceforth, we focus solely on the case of graphs with no bidirected edges.


\section{Results} \label{sec:Results}

In this section, we assume $G=(V,D)$ is a directed graph, possibly obtained as
the bidirected subdivision of some mixed graph. Our goal is to find a
cancellation-free analogue of the formula for $\det \Sigma_{A,B}$ in
Proposition~\ref{prop:DetSigmaEasy}. The exact meaning of
cancellation-free depends on the context. If $G$ is acyclic, then
$\det \Sigma_{A,B}$ is a polynomial in the variables $\omega_{ii}$
and the variables $\lambda_{ij}$ for directed edges $i \to j$, and
cancellation-free means that we determine which monomials have non-zero
coefficients in $\det \Sigma_{A,B}$, and what those coefficients are.

If, on the other hand, $G$ contains directed cycles, then there may be
infinitely many treks from $i$ to $j$. In this case $\det \Sigma_{A,B}$ is
a formal power series representing a rational function (as is clear from
$\Sigma=(I-\Lambda)^{-T}\Omega (I-\Lambda)^{-1}$ using Cramer's rule). We
will then write $\det \Sigma_{A,B}$ as a fraction $\frac{R}{S}$ where $R$
and $S$ are polynomials with known monomials and known coefficients.

The formula that we derive in the general case has $S=1$ when specialized
to the acyclic case, hence our proof will immediately focus on the
general case. For the purpose of exposition, however, we first present
the formula for the acyclic case, before dealing with the general case, which is more combinatorially complicated.

\subsection{Directed acyclic graphs}

\begin{defn}
Let $A$ and $B$ be sets of $k$ vertices.  A {\em trek system} $\bfT$ from $A$ to $B$ consists of $k$ treks whose initial vertices exhaust
the set $A$ and whose final vertices exhaust the set $B$.
\end{defn}

Such a trek system gives rise to a bijection $A \to B$ mapping $a$ to
the final vertex of the (unique) trek in $\bfT$ starting at $a$. Given
a linear ordering $a_1,\ldots,a_k$ of $A$ and a linear ordering
$b_1,\ldots,b_k$ of $B$, this bijection determines an element $\pi$
of $\frs_k$ (which depends on the
linear orderings of $A$ and $B$), and we define $\sgn \bfT:=\sgn \pi$. We define the {\em trek system monomial}
$m_\bfT$ as the the product of the trek monomials $m_T$ ranging over $T \in \bfT$. The
formula in Proposition~\ref{prop:DetSigmaEasy} can then be rewritten as
\begin{equation} \label{eq:DetSigmaEasy2}
\det \Sigma_{A,B}= \sum_{\bfT} \sgn(\bfT) m_{\bfT}
\end{equation}
where $\bfT$ runs over all trek systems from $A$ to $B$.  Cancellation can
happen if distinct $m_{\bfT}$ and $m_{\bfT'}$ are the same. To achieve
a cancellation-free formula we introduce the following notion (see
\cite{Sullivant2010}).

\begin{defn}
A trek system $\bfT$ from $A$ to $B$ has {\em no sided intersection} if
the left parts $P_L$ for $(P_L,P_R) \in \bfT$ are mutually vertex-disjoint
and also the right parts $P_R$ for $(P_L,P_R) \in \bfT$ are mutually
vertex-disjoint (but any $P_L$ may have vertices in common with
any $P_R'$). Otherwise, $\bfT$ is said to have a sided intersection.
We write $\calt(A,B)$ for the set of trek systems from $A$ to $B$ with
no sided intersection.
\end{defn}

In \cite{Sullivant2010} it is proved that in \eqref{eq:DetSigmaEasy2}
one may restrict $\bfT$ to run over all trek systems without sided
intersection. Our first new result is that these terms all have the same sign and thus do not cancel.

\begin{thm}[Positivity for acyclic digraphs]
\label{thm:PositivityAcyclic}
Let $G=(V,D)$ be an acyclic digraph and let $A$ and $B$ be subsets of $V$
of the same cardinality.  Assume we have fixed linear orderings of $A$ and $B$. If two
trek systems $\bfT$ and $\bfT'$ from $A$ to $B$ without sided intersection satisfy $m_\bfT = m_\bfT'$, then $\sgn(\bfT)=\sgn(\bfT')$.
\end{thm}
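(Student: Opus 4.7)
The plan is to prove a strengthening of the theorem: any two trek systems $\bfT, \bfT'$ from $A$ to $B$ without sided intersection satisfying $m_\bfT = m_{\bfT'}$ in fact induce the \emph{same} bijection $\pi : A \to B$, so that $\sgn(\bfT) = \sgn(\bfT')$ follows. First, I extract from the common monomial the shared combinatorial data: the set $S$ of top vertices (a set, not a multiset, because the no-left-sided-intersection condition forces the top vertices to be distinct), and the multiset $E$ of edges appearing in the trek system, in which each edge has multiplicity at most $2$ (because left paths are mutually vertex-disjoint and so are right paths). Each of $\bfT,\bfT'$ then corresponds to a decomposition $E = L \uplus R$, where $L$ is a vertex-disjoint directed path system from $S$ to $A$ (the union of all left paths) and $R$ is the analogous system from $S$ to $B$. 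Since every vertex of $L$ has in-degree and out-degree at most $1$, the constituent paths of $L$ are recovered uniquely as its maximal chains, so $L$ alone determines the bijection $\alpha : S \to A$, and likewise $\beta : S \to B$ is determined by $R$. With respect to fixed orderings one has $\sgn(\bfT) = \sgn(\alpha)\sgn(\beta)$ and $\pi_\bfT = \beta \circ \alpha^{-1}$.

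The core claim I plan to prove is: for any two valid decompositions $(L, R)$ and $(L', R')$ of the common edge multiset $E$, there exists a permutation $\sigma \in \frs_S$ such that $\alpha' = \alpha \circ \sigma$ and $\beta' = \beta \circ \sigma$. Granted the claim, $\pi_{\bfT'} = \beta' \circ (\alpha')^{-1} = \beta \circ \sigma \circ \sigma^{-1} \circ \alpha^{-1} = \pi_\bfT$, and the theorem follows. To prove the claim I would analyze the symmetric difference $L \oplus L' = R \oplus R'$, which are equal because $L + R = L' + R' = E$, and which is contained in the subset $E_1$ of multiplicity-$1$ edges of $E$. Since both $L$ and $L'$ have in-degree and out-degree at most $1$ at every vertex, this symmetric difference has in- and out-degree at most $2$ at every vertex, and it decomposes into alternating structures that describe precisely how switching the $L/R$-assignments of certain $E_1$-edges permutes the top-vertex labels attached to the (left path, right path) pairs of the trek system.

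The main obstacle I anticipate is verifying that the permutation of $S$ induced by such a swap from the $L$-side coincides with the one induced from the $R$-side. Intuitively, moving an $E_1$-edge from $L$ to $R$ at a vertex $v$ forces a compensating swap of some partner edge in the opposite direction at $v$, so the swap propagates through $G$ as a coupled ``$L/R$-chain''; the acyclicity of $G$ should guarantee that these chains terminate cleanly at vertices of $S$, $A$, and $B$, and that the induced permutations of $S$ on the two sides are forced to coincide. Carrying out this propagation step rigorously is the only nontrivial component of the proof, and it is where the structural roles of acyclicity and of no sided intersection are strongest.
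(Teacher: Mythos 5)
Your preliminary reductions are all correct: the monomial determines the set $S$ of tops and the edge multiset $E$; each of $\bfT,\bfT'$ corresponds to a decomposition $E=L\uplus R$ into two vertex-disjoint path systems; $L$ determines $\alpha:S\to A$ and $R$ determines $\beta:S\to B$; and $\sgn(\bfT)=\sgn(\alpha)\sgn(\beta)=\sgn(\beta\circ\alpha^{-1})$. But the core claim on which everything rests --- that any two valid decompositions induce the same bijection $\pi:A\to B$, equivalently that $\alpha'=\alpha\circ\sigma$ and $\beta'=\beta\circ\sigma$ for a \emph{single} permutation $\sigma$ of $S$ --- is false. Take $V=\{s_1,s_2,s_3,t_1,t_2,t_3\}$ with the six edges $s_i\to t_i$ and $s_i\to t_{i+1}$ (indices mod $3$), and $A=B=\{t_1,t_2,t_3\}$. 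Let $\bfT$ consist of the treks $T_i=(s_i\to t_i,\ s_i\to t_{i+1})$ from $t_i$ to $t_{i+1}$, and $\bfT'$ of the treks $T_i'=(s_i\to t_{i+1},\ s_i\to t_i)$ from $t_{i+1}$ to $t_i$. Both systems have no sided intersection (in each, the three left paths are single pairwise disjoint edges, and likewise the three right paths), and
\[
m_{\bfT}=m_{\bfT'}=\omega_{s_1s_1}\omega_{s_2s_2}\omega_{s_3s_3}\prod_{i=1}^{3}\lambda_{s_it_i}\lambda_{s_it_{i+1}},
\]
yet $\pi_{\bfT}$ is the $3$-cycle $t_i\mapsto t_{i+1}$ while $\pi_{\bfT'}$ is its inverse. (If you want $A$ and $B$ disjoint, append pendant edges $t_i\to a_i$ and $t_i\to b_i$ and extend the paths; nothing changes.) Here $\alpha'=\alpha\circ(s_1s_2s_3)$ but $\beta'=\beta\circ(s_1s_3s_2)$: the left and right systems are re-matched at the tops by two \emph{different} permutations, which merely happen to have the same sign. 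So the difficulty is not the ``propagation step'' you flag at the end --- the statement you are propagating toward is already wrong, and no analysis of $L\oplus L'=R\oplus R'$ can establish it.

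What does survive is exactly the sign, and that is what the paper's argument is built to capture. It shows that two systems with the same monomial differ by flipping the $L/R$-assignment along a disjoint union of up-down cycles (a $\ZZ/2\ZZ$-linear-algebra statement), and that flipping one up-down cycle multiplies $\sgn(\alpha)$ and $\sgn(\beta)$ by the factors $\sgn(\delta_1,\upsilon_0)/\sgn(\delta_0,\upsilon_0)$ and $\sgn(\delta_0,\upsilon_1)/\sgn(\delta_1,\upsilon_1)$, whose product equals $1$ by Lemma~\ref{lm:delta01} even though neither factor need be $1$ individually. In the example above the unique up-down cycle is the entire edge set, and the two re-matching permutations are mutually inverse $3$-cycles, each of sign $+1$. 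Any repair of your approach must replace ``$\pi$ is invariant'' by a genuine comparison of the signs of the two re-matching permutations; that comparison is the actual content of the theorem.
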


\begin{figure}
\includegraphics[]{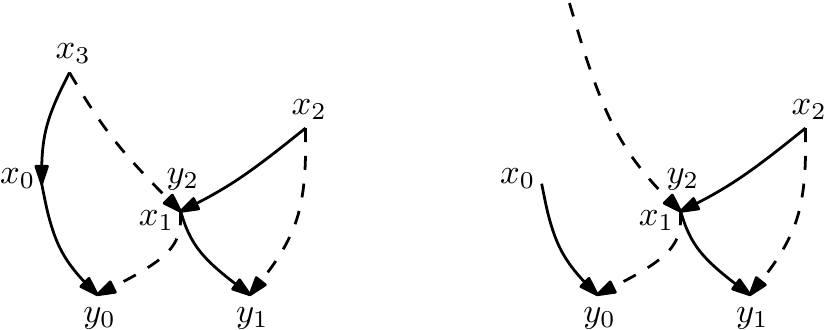}
\caption{Solid directed edges denote parts of left paths in $\bfT$, and dashed
directed edges denote parts of right paths in $\bfT$. On the left an up-down cycle
is created, on the right, we traverse a right path up without
hitting a further left path.}
\label{fig:updown1}
\end{figure}

To obtain the desired cancellation-free formula we therefore only need
to compute the number of trek systems $\bfT'$ with the same trek system monomial as
$\bfT$. For this we introduce combinatorial objects that we call {\em
up-down cycles} in $\bfT$.  Rather than giving a formal definition we
describe their construction; see 
Figure~\ref{fig:updown1} for an example. Let $x_0 \to y$ be an edge in some left path $P_L$
of $\bfT$, and assume that $x_0 \to y$ is {\em not} contained in any right
path of $\bfT$. Follow the directed path $P_L$ {\em downwards}, i.e. with the direction of the graph, starting
with $x_0 \to y$; since $\bfT$ has no sided intersection, we will never intersect
another left path. If no vertex on $P_L$ after the edge $x_0 \to y$ is in
any right path, then $x_0 \to y$ is not in any up-down cycle. Otherwise,
let $y_0$ be the first vertex on $P_L$ after $x_0 \to y$ (possibly $y_0$
equals $y$) that is also on some (necessarily unique) right path $P_R'$ of a trek $(P_L',P_R') \in \bfT$. Then follow $P_R'$ {\em upwards}, i.e. against the direction of the graph, from
$y_0$. If we never intersect another vertex on any left path (different from
$y_0$), then again $x_0 \to y$ is not in any up-down cycle. Otherwise,
let $x_1$ be the first vertex encountered when traversing $P_R'$ upwards
that is contained in any left path from $\bfT$; say $P_L''$.  Then follow
$P_L''$ down from $x_1$, etc. Continuing in this manner we construct a
sequence of non-empty paths $x_0 \tto y_0 \ott x_1 \tto \ldots$ in $G$
that are alternatingly contained in left paths and right paths of $\bfT$
and that do not contain edges shared by some left path with some right
path (because we always turn at the {\em first} possible vertex in each
step). Only three things can happen: we traverse a left
path downward to its final vertex without hitting a further right path,
we traverse a right path upward to its initial vertex
without hitting a further left path, or we eventually follow the
original path $P_L$ from somewhere above the edge $x_0 \to y$. In the
final case, the sequence of paths closes up and the union of their edge
sets is what we call an {\em up-down cycle} in $\bfT$. The set of all
up-down cycles in is denoted $\UD(\bfT)$. For a more formal definition
see the next subsection on the general directed case.

\begin{thm}[Power-of-two phenomenon for acyclic digraphs]
\label{thm:PowerOfTwoAcyclic}
Let $G=(V,D)$ be an acyclic digraph and let $A$ and $B$ be subsets of $V$
of the same cardinality, and assume we have fixed linear orderings of $A$ and $B$, as in
Theorem~\ref{thm:PositivityAcyclic}. Let $\bfT$ be a
trek system in $G$ without sided intersection. Then the number of trek systems $\bfT'
\in \calt(A,B)$ with $m_{\bfT'}=m_{\bfT}$ equals $2^{|\UD(\bfT)|}$.
\end{thm}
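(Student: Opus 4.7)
The plan is to exhibit an explicit bijection between subsets $S \subseteq \UD(\bfT)$ and trek systems $\bfT' \in \calt(A,B)$ satisfying $m_{\bfT'} = m_\bfT$. For each up-down cycle $C \in \UD(\bfT)$, I would define a \emph{swap} $\tau_C$ that reassigns the edges of $C$ between left and right paths. The cycle $C$ alternates between down-segments $D_1,\ldots,D_r$ lying on left paths of $\bfT$ and up-segments $U_1,\ldots,U_r$ lying on right paths of $\bfT$, joined at turning vertices where a left and a right path of $\bfT$ meet. At each turning vertex the swap reroutes the two paths so that in $\tau_C(\bfT)$ the edges of each $D_j$ become part of a right path, while the edges of each $U_j$ (oriented forward in $G$) become part of a left path. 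I would check that $\tau_C(\bfT)$ is a valid element of $\calt(A,B)$: the multiset of tops is unchanged, the final vertices of the left and right paths still exhaust $A$ and $B$ respectively, and no sided intersection is introduced since the rewiring happens precisely where paths already met. Because the edge multiset and multiset of tops are unchanged, $m_{\tau_C(\bfT)} = m_\bfT$.

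Next I would show that the up-down cycles of $\bfT$ are pairwise edge-disjoint. The cycle-building procedure is deterministic: given an edge of a cycle together with its role as a down- or an up-segment, the turning vertices, and hence the entire cycle, are uniquely determined; so two cycles sharing an edge must coincide. Edge-disjointness implies that the $\tau_C$ commute, so $\tau_S := \prod_{C \in S} \tau_C$ is unambiguously defined for each $S \subseteq \UD(\bfT)$. Since $E_L(\tau_S(\bfT)) \triangle E_L(\bfT) = \bigsqcup_{C \in S} E(C)$, distinct subsets yield distinct trek systems, producing at least $2^{|\UD(\bfT)|}$ monomial-equivalent elements of $\calt(A,B)$.

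The crux is the reverse inclusion: every $\bfT' \in \calt(A,B)$ with $m_{\bfT'} = m_\bfT$ arises as $\tau_S(\bfT)$ for some $S$. Preservation of the edge multiset, combined with the no-sided-intersection condition forcing each edge to have multiplicity at most $1$ in $E_L$ and separately in $E_R$, yields
$$ S_e \defas E_L(\bfT) \triangle E_L(\bfT') = E_R(\bfT) \triangle E_R(\bfT'), $$
and every edge of $S_e$ is unshared in both systems. I would prove that $S_e$ decomposes as a disjoint union of edge sets of up-down cycles of $\bfT$: picking any $e \in E_L(\bfT) \setminus E_L(\bfT')$, I apply the up-down cycle construction with $e$ as initial edge; argue the chain must close into a cycle $C \subseteq S_e$ rather than terminate; then iterate on $S_e \setminus E(C)$. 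Setting $S$ to be the resulting collection of cycles gives $\bfT' = \tau_S(\bfT)$.

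The main obstacle is this non-termination claim. Suppose the chain were to terminate at a final vertex $i \in A$, reaching it along the end of a left path $P_L''$ of $\bfT$; then its final edge $e_i$ lies in $E_L(\bfT) \cap E_R(\bfT')$. Meanwhile some left path of $\bfT'$ ends at $i$ with an incoming edge $e_i' \in E_L(\bfT')$. A local tally of the edges into $i$, using that each of $E_L(\bfT)$, $E_R(\bfT)$, $E_L(\bfT')$, $E_R(\bfT')$ contributes at most one incoming edge at $i$ and that the total multiset is preserved, should force $e_i' \in S_e$, placing $i$ on the chain already and contradicting termination. The symmetric case of termination at a top is handled the same way. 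This local parity analysis is a multi-trek generalization of the tailswapping argument underlying Proposition~\ref{prop:2n1}.
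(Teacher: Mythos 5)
Your proposal is correct and follows essentially the same route as the paper: the paper identifies monomial-equivalent trek systems with $\ZZ/2\ZZ$-colorings, observes that the set of differences from $\bfT$ forms a subgroup $K(\bfT)$ of $(\ZZ/2\ZZ)^D$, and proves that the characteristic vectors of the (pairwise edge-disjoint) up-down cycles form a basis of $K(\bfT)$ --- which is precisely your decomposition of the symmetric difference $E_L(\bfT) \triangle E_L(\bfT')$ into edge-disjoint up-down cycles together with your commuting swaps $\tau_C$. The only organizational difference is that the paper first performs a sequence of graph reductions (separating $A$ and $B$, contracting bi-colored edges and degree-two vertices) so that the chain-closing step you rightly flag as the crux becomes a clean local degree-parity argument at each vertex.
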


A consequence of the previous two theorems is the following formula.

\begin{cor}[Cancellation-free formula for acyclic digraphs]
\label{cor:CancellationFreeAcyclic}
Let $G=(V,D)$, $A$, and $B$ be as in Theorem~\ref{thm:PositivityAcyclic}. Then
\[ \det \Sigma_{A,B} = \sum_{[\bfT]_\sim \in \calt(A,B)/\sim} \sgn(\bfT) 2^{|\UD(\bfT)|} m_{\bfT} \]
where the sum runs over equivalence classes of the relation $\sim$
defined by $\bfT \sim \bfT'$ if and only if $m_{\bfT}=m_{\bfT'}$.
\end{cor}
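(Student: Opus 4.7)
The plan is to derive the corollary by combining the three results already available: the Leibniz-style expansion from Proposition~\ref{prop:DetSigmaEasy} (equivalently equation \eqref{eq:DetSigmaEasy2}), the reduction to trek systems without sided intersection from \cite{Sullivant2010}, and Theorems~\ref{thm:PositivityAcyclic} and \ref{thm:PowerOfTwoAcyclic}. No new combinatorial idea is needed; the work is purely bookkeeping of signs and multiplicities.

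First I would start from the identity
\[
\det \Sigma_{A,B} = \sum_{\bfT} \sgn(\bfT)\, m_{\bfT},
\]
where $\bfT$ ranges over all trek systems from $A$ to $B$. Next I would invoke the cited result from \cite{Sullivant2010} that all contributions from trek systems \emph{with} a sided intersection cancel, so that the sum may be restricted to $\bfT \in \calt(A,B)$:
\[
\det \Sigma_{A,B} = \sum_{\bfT \in \calt(A,B)} \sgn(\bfT)\, m_{\bfT}.
\]
This reduction is what makes the remaining analysis clean, since within $\calt(A,B)$ both the sign and the multiplicity of a monomial are completely controlled by the two new theorems.

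I would then partition $\calt(A,B)$ into equivalence classes under $\sim$, where $\bfT \sim \bfT'$ iff $m_{\bfT} = m_{\bfT'}$. By Theorem~\ref{thm:PositivityAcyclic}, every trek system in a fixed class $[\bfT]_{\sim}$ has the same sign $\sgn(\bfT)$, so each class contributes a nonnegative integer multiple of the signed monomial $\sgn(\bfT)\, m_{\bfT}$ (no internal cancellation). By Theorem~\ref{thm:PowerOfTwoAcyclic}, the number of elements in the class is exactly $2^{|\UD(\bfT)|}$, and this count is well-defined on the class because any representative yields the same monomial and hence the same class size. Collecting terms class-by-class gives
\[
\det \Sigma_{A,B} = \sum_{[\bfT]_\sim \in \calt(A,B)/\sim} \sgn(\bfT)\, 2^{|\UD(\bfT)|}\, m_{\bfT},
\]
which is the desired cancellation-free formula.

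The only subtle point, and the one I would flag as the main place to be careful, is verifying that $|\UD(\bfT)|$ is indeed an invariant of the equivalence class, so that the right-hand side is well-defined independent of the chosen representative. This follows from Theorem~\ref{thm:PowerOfTwoAcyclic} itself (since the class size $2^{|\UD(\bfT)|}$ is clearly intrinsic to the class), but a cleaner conceptual check is to note that any two representatives of $[\bfT]_\sim$ share the same multiset of edges on left paths and the same multiset on right paths, hence the construction of up-down cycles yields combinatorially isomorphic objects. Beyond this sanity check, the corollary is an immediate consequence of the preceding results.
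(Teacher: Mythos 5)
Your proposal is correct and follows essentially the same route as the paper: the corollary is stated there as ``a consequence of the previous two theorems,'' obtained exactly as you do by restricting the expansion \eqref{eq:DetSigmaEasy2} to trek systems without sided intersection (citing \cite{Sullivant2010}) and then collecting terms within each $\sim$-class using Theorems~\ref{thm:PositivityAcyclic} and~\ref{thm:PowerOfTwoAcyclic}. Your extra remark on the well-definedness of $|\UD(\bfT)|$ on equivalence classes is a reasonable sanity check that the paper leaves implicit.
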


\subsection{General directed graphs} \label{ssec:General}

For a general digraph $G=(V,D)$ that may contain cycles, the analogues
of Theorems~\ref{thm:PositivityAcyclic} and~\ref{thm:PowerOfTwoAcyclic}
and Corollary~\ref{cor:CancellationFreeAcyclic} require some notions
from \cite{Talaska12}.

\begin{defn}
Let $A=\{a_1,\ldots,a_k\}$ and $B=\{b_1,\ldots,b_k\}$ be $k$-subsets of $V$ with fixed linear orderings.  A {\em self-avoiding flow}
from $A$ to $B$ in $G$ is a pair $\bfF=(\bfP,\bfC)$ where $\bfP$ is
a set of $k$ self-avoiding and pairwise vertex-disjoint paths whose
initial vertices exhaust $A$ and whose final vertices exhaust $B$, and
where $\bfC$ is a set of non-empty, self-avoiding directed cycles that
are pairwise vertex-disjoint and that are also vertex-disjoint from all
paths in $\bfP$. The path component $\bfP$ gives rise to a bijection
$A \to B$, which through the linear orderings corresponds to an element
$\pi \in \frs_k$.  The {\em sign} $\sgn \bfF$ is defined as $\sgn(\pi)
\cdot (-1)^{|\bfC|}$, where $|\bfC|$ denotes the number of cycles in the collection $\bfC$.
\end{defn}

In this definition a self-avoiding cycle is identified with its set of
edges (without distinguished initial vertex). We introduce a trek analogue of this notion
as follows.

\begin{defn}
Let $A=\{a_1,\ldots,a_k\}$ and $B=\{b_1,\ldots,b_k\}$ be $k$-subsets of $V$ with fixed linear orderings.  A {\em self-avoiding trek
flow} in the digraph $G$ is a pair $\bfT=(\bfF_L,\bfF_R)$ where $\bfF_L$
is a self-avoiding flow from a (necessarily unique) $k$-subset $S$ of $V$ to $A$,
and $\bfF_R$ is a self-avoiding flow from the same set $S$ to $B$. We
call $S$ the set of {\em tops} of the trek flow. We write $\calt(A,B)$
for the set of self-avoiding trek flows from $A$ to $B$.
\end{defn}

Note that our notation $\bfT$ and $\calt(A,B)$ is consistent with the acyclic
case: there the cycle components $\bfC_L$ and $\bfC_R$ of $\bfF_L$ and $\bfF_R$
are empty, and the condition that the path component of $\bfF_L$
(respectively, $\bfF_R$) consists of self-avoiding and pairwise
vertex-disjoint paths is equivalent to the condition that a trek system
has no sided intersection.

\begin{defn}
The {\em sign} of a self-avoiding trek flow
$\bfT=(\bfF_L,\bfF_R)$ from $A$ to $B$ (relative to fixed linear
orderings on these sets) is defined as $\sgn(\bfT)=\sgn \bfF_R \cdot \sgn \bfF_L$.
(This depends on the orderings of $A$ and $B$, but not on the linear
ordering of the set of tops, as long as the same ordering is used for
determining the signs of both $\bfF_R$ and $\bfF_L$).

The {\em trek flow monomial} $m_\bfT$ is defined as the product of
the variables $\omega_{ii}$ for $i\in S$,
the variables $\lambda_{ij}$ corresponding to all edges $i \to j$ used
by paths and cycles in $\bfF_L$ and $\bfF_R$, taken with the appropriate multiplicities.
\end{defn}

Note that, in particular, the fact that $\bfT$ is self-avoiding implies
that each $\lambda_{ij}$ appears with degree at most $2$ in $m_\bfT$.
Using a theorem by the third author \cite{Talaska12}, we will write $\det
\Sigma_{A,B}$ as a rational function where both the numerator and the
denominator are signed sums over self-avoiding trek flows; see below. The
following theorem says that no cancellation occurs.

\begin{thm}[Positivity for general digraphs]
\label{thm:PositivityGeneral}
Let $G=(V,D)$ be a digraph and let $A$ and $B$ be subsets of $V$ of the same
cardinality, both with fixed linear orderings. If two self-avoiding
trek flows $\bfT$ and $\bfT'$ from $A$ to $B$ satisfy $m_\bfT = m_\bfT'$, then $\sgn(\bfT)=\sgn(\bfT')$.
\end{thm}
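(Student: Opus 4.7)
The strategy is to show that any two self-avoiding trek flows $\bfT=(\bfF_L,\bfF_R)$ and $\bfT'=(\bfF_L',\bfF_R')$ with $m_\bfT=m_{\bfT'}$ can be connected by a finite sequence of local moves, each of which preserves both the monomial and the product $\sgn(\bfT)=\sgn\bfF_L\cdot\sgn\bfF_R$. First I would extract from $m_\bfT$ the data that $m_{\bfT'}$ must share: the set $S$ of tops, read off the $\omega_{ii}$ factors, and the total multiplicity ($0$, $1$, or $2$, since the trek flow is self-avoiding) of every directed edge $i\to j$ in $\bfF_L\cup\bfF_R$, read off the exponent of $\lambda_{ij}$. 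Thus $\bfT$ and $\bfT'$ share the same tops and the same underlying edge multiset $M$; the only freedom is the L/R partition of the edges of $M$, subject on each side to the flow constraints built into the definition of a self-avoiding flow (net out-degree $1$ at a top, in-degree $1$ at a vertex of $A$ or $B$, balance elsewhere).

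Next I would analyze the symmetric difference between the L-labelings in $\bfT$ and $\bfT'$. Because $M$ is fixed, every L-edge of $\bfT$ that fails to be an L-edge of $\bfT'$ is matched by an R-edge of $\bfT$ that fails to be an R-edge of $\bfT'$, and these compensating label changes should organize into edge-disjoint \emph{swap components}. I anticipate three basic types: (i) a \emph{trek swap}, a directed loop through two tops $s,s'\in S$ that interchanges which top feeds which endpoint in $A$ and $B$ via a shared pivot vertex; (ii) a \emph{pure cycle swap}, which transfers an internal directed cycle from $\bfC_L$ to $\bfC_R$ or conversely, with the bijections unaffected; and possibly (iii) a \emph{mixed swap} fusing a cycle of one flow with a path of the other. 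A decomposition argument in the spirit of Talaska's generalization of the Lindstr\"om--Gessel--Viennot lemma from Section~\ref{sec:Kelli} should show that these swaps generate the equivalence relation ``same tops and same edge multiset'' on self-avoiding trek flows.

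Finally, I would verify that each swap type preserves the sign. In a trek swap, both the bijection $S\to A$ carried by $\bfF_L$ and the bijection $S\to B$ carried by $\bfF_R$ pick up the same transposition $(s,s')$, so each of $\sgn\bfF_L$ and $\sgn\bfF_R$ flips and the product is preserved. In a pure cycle swap, $|\bfC_L|$ and $|\bfC_R|$ change by $+1$ and $-1$, flipping both $(-1)^{|\bfC_L|}$ and $(-1)^{|\bfC_R|}$ and again preserving the product. The mixed case should combine a single change in $|\bfC|$ on one side with a compensating transposition on the other, contributing $(-1)\cdot(-1)=1$. The main obstacle will be the second step: pinning down a notion of swap component that is truly local, reversible, preserves self-avoidance and pairwise vertex-disjointness on each side, and is rich enough to connect all trek flows within one monomial class. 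I expect the cleanest treatment to work with an auxiliary ``discrepancy multigraph'' whose Eulerian decomposition yields the swap components, in close analogy with the sign-reversing involutions that underpin the results of Section~\ref{sec:Kelli}.
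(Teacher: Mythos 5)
Your overall strategy coincides with the paper's: reduce to the observation that $\bfT$ and $\bfT'$ share the same tops and the same edge multiset, decompose the set of edges where the $L/R$-labels disagree into edge-disjoint local components, and show that flipping the labels on one component preserves $\sgn\bfF_L\cdot\sgn\bfF_R$. Your ``swap components'' are exactly the paper's up-down cycles, and your ``discrepancy multigraph'' idea is close to the paper's argument that, after contracting bi-colored edges and degree-two vertices, the discrepancy $\bfT+\bfT'$ lies in a subgroup $K(\bfT)\subseteq(\ZZ/2\ZZ)^D$ spanned by up-down-cycle colorings. Your pure cycle swap is also handled correctly and matches a preliminary reduction in the paper.

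The genuine gap is in the sign verification, which is where all the difficulty of this theorem lives. Your taxonomy of three swap types with trivially computable sign effects does not hold. A single up-down cycle is a closed alternating sequence that can thread through arbitrarily many paths \emph{and} cycles of both $\bfF_L$ and $\bfF_R$ simultaneously (see Figure~\ref{fig:updown1}); flipping its labels in general (i) changes the bijection $S\to A$ and the bijection $S\to B$ by \emph{different} permutations, not by ``the same transposition $(s,s')$''; (ii) can create or destroy several cycles on each side at once, because segments of paths can close up into cycles and cycles can be absorbed into paths; and (iii) mixes these two effects in a way that is not captured by ``one cycle change compensated by one transposition.'' The paper has to introduce the signed pairs $(\delta,\upsilon)$ of Definition~\ref{def:sgndeltaupsilon} --- where $\delta_0,\delta_1$ record how the up-down cycle's two color classes connect its corner vertices and $\upsilon_0,\upsilon_1$ record how paths of each $\bfT$-color ``loop back'' outside the up-down cycle --- and then prove Lemma~\ref{lemma:signswitch} (that $\sgn(\delta,\upsilon)\sgn(\delta)$ is independent of $\delta$) to obtain the identity $\sgn(\delta_0,\upsilon_0)\sgn(\delta_1,\upsilon_1)=\sgn(\delta_1,\upsilon_0)\sgn(\delta_0,\upsilon_1)$ of Lemma~\ref{lm:delta01}; only this identity yields that the product of the left and right flow signs is preserved. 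Your proposal neither supplies this computation nor identifies that it is needed: the ``mixed swap'' case is asserted with the single line $(-1)\cdot(-1)=1$, which does not cover the general interaction. As written, the argument would fail on any up-down cycle meeting more than two treks or simultaneously reorganizing paths and cycles.
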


Next we compute the number of trek flows $\bfT'$ with the same trek flow
monomial as $\bfT$. Again, our main combinatorial tools are up-down cycles, which we define more formally at this point.

\begin{defn}
Let $\bfT=(\bfF_L,\bfF_R)$ be a self-avoiding trek flow, and let $E$ be
the set of edges used by $\bfF_L$ or by $\bfF_R$ but {\em not} by both.
Define a directed graph $\Gamma$ with vertex set
$E$ as follows: an element $e=x \to y \in E$ used by $\bfF_L$ has at
most one outgoing arrow in $\Gamma$. If $y$ is visited by some path
or cycle in $\bfF_R$ {\em and} has an incoming edge $f=z \to y$ there,
then the arrow from $e$ points to $f$. If $y$ is not visited by any path
or cycle in $\bfF_R$ (not even by an empty path based at $y$) {\em and}
has an outgoing edge $f=y \to z$ in $\bfF_L$, then the arrow from $e$
points to $f$. In all other cases, $e$ has no outgoing arrow in $\Gamma$.
Similarly, an element $e=x \to y \in E$ used by $\bfF_R$ has at most one
outgoing arrow. If $x$ is visited by $\bfF_L$ {\em and} has an outgoing
edge $f=x \to z$ there, then the arrow from $e$ points to $f$. If $x$
is not visited by $\bfF_L$ {\em and} has an incoming arrow $f=z\to x$
in $\bfF_R$, then the arrow from $e$ points to $f$. In all other cases,
$e$ has no outgoing arrow.

Now an {\em up-down cycle} in $\bfT$ is the support $F \subseteq E
\subseteq D$ of any non-empty directed cycle in the digraph $\Gamma$. We
write $\UD(\bfT)$ for the set of all up-down cycles in $\bfT$.
\end{defn}

In fact, the definition of $\Gamma$ forces every element of $E$
to have at most one incoming arrow, as well. As a consequence, $\Gamma$
is a union of vertex-disjoint directed cycles and paths. This implies
that distinct up-down cycles in $\bfT$ do not share edges, a fact that
we will use later. It is easily verified that this definition of up-down
cycle agrees with the construction given in the acyclic case. One new
aspect is that the edges of a directed cycle in the cycle component
of $\bfF_L$ which avoid $\bfF_R$ (or vice versa) also form an up-down cycle.

\begin{thm}[Power-of-two phenomenon for general digraphs]
\label{thm:PowerOfTwoGeneral}

Let $G=(V,D)$ be a digraph and let $A$ and $B$ be subsets of $V$ of the same
cardinality, both with fixed linear orderings, as in Theorem~\ref{thm:PositivityGeneral}. Let $\bfT$ be a
self-avoiding trek flow in $G$. Then the number of trek flows $\bfT' \in \calt(A,B)$
with $m_{\bfT'}=m_{\bfT}$ equals $2^{|\UD(\bfT)|}$.
\end{thm}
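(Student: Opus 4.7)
The plan is to exhibit a bijection between subsets $\mathcal{S} \subseteq \UD(\bfT)$ and trek flows $\bfT' \in \calt(A,B)$ with $m_{\bfT'} = m_\bfT$, where the bijection sends $\mathcal{S}$ to the trek flow obtained by simultaneously swapping the side assignments on all edges in $\bigcup_{C \in \mathcal{S}} C$. Since distinct up-down cycles are edge-disjoint (by the observation following the definition of $\Gamma$), the operations corresponding to distinct $\mathcal{S}$ produce distinct edge assignments and hence distinct trek flows; so once the bijection is established the count $2^{|\UD(\bfT)|}$ follows immediately.

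First I would reduce the combinatorial problem to a question about side assignments. Any $\bfT' = (\bfF_L', \bfF_R')$ with $m_{\bfT'} = m_\bfT$ must share with $\bfT$ its set of tops $S$ (which contributes the $\omega_{ii}$ factors) and its multiset of used edges (each edge shared by $\bfF_L$ and $\bfF_R$ contributing degree two and each unshared edge contributing degree one). In particular the sets $E$ and $E' := \bfF_L' \triangle \bfF_R'$ of degree-one edges are identical, and the only freedom in recovering $\bfT'$ from this common data is the \emph{side assignment} $\sigma \colon E \to \{L, R\}$ specifying to which of $\bfF_L', \bfF_R'$ each edge of $E$ belongs. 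Thus $\bfT'$ is determined by the set $F \subseteq E$ on which its side assignment disagrees with that of $\bfT$, reducing the theorem to the claim that the sets $F$ which arise this way are exactly the unions of up-down cycles.

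Next I would establish this claim by a local balance argument at each vertex. If $F$ is toggled, then at a vertex $v$ incident to some edge of $F$ the multisets of $\bfF_L$- and $\bfF_R$-edges incoming to and outgoing from $v$ change, and the result is a valid self-avoiding trek flow with the same tops and same $A,B$-endpoints precisely when these changes balance: an incoming $\bfF_L$-edge that flips to $\bfF_R$ must be compensated either by an outgoing $\bfF_L$-edge at $v$ that also flips (keeping the $\bfF_L$-path through $v$ balanced) or by an incoming $\bfF_R$-edge at $v$ that flips to $\bfF_L$. A direct case analysis --- separating the subcases where $v$ is an interior vertex of a path, a vertex on a cycle, a top in $S$, or an endpoint in $A \cup B$ --- shows that the arrows in $\Gamma$ are designed precisely to encode these forced co-flips: the outgoing arrow from $e \in E$ in $\Gamma$ points to the unique edge that must be flipped together with $e$ to preserve balance at the relevant endpoint, and the ``dead-end'' edges of $\Gamma$ are exactly those whose flipping would alter the set of tops or the endpoint bijection $A \to B$. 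Consequently, $F$ preserves balance at every vertex if and only if $F$ is a union of cycles of $\Gamma$, which by definition means $F$ is a union of up-down cycles.

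The main obstacle is the case analysis in this balance lemma, particularly ensuring that after a swap the connected components of $\bfF_L'$ and $\bfF_R'$ remain self-avoiding paths (starting at tops and ending in $A$, respectively $B$) and pairwise vertex-disjoint self-avoiding cycles, with no merging of distinct path components and no accidental creation of new tops. The subtle point is that swapping along an up-down cycle can interchange a path-arc with a cycle-arc or reroute the ``tail'' of one path into another, yet the closed nature of the up-down cycle in $\Gamma$ guarantees that these rearrangements compose to a global permutation which preserves the partition of $\bfF_L' \cup \bfF_R'$ into $|A| + |B| + |\bfC_L| + |\bfC_R|$ components of the correct types. Once this is verified, the reverse direction (any union of up-down cycles yields a valid equivalent trek flow) and uniqueness (distinct subsets give distinct $F$, hence distinct $\bfT'$) are immediate, completing the bijection and the count.
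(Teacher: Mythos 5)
Your proposal is correct and follows essentially the same route as the paper: the paper also identifies trek flows with the same monomial via their side assignments on the symmetric-difference edge set, characterizes the admissible ``flip sets'' by local in/out-degree balance at each vertex, and shows these are exactly the unions of (edge-disjoint) up-down cycles --- it just packages this as the statement that the up-down colorings form a $\ZZ/2\ZZ$-basis of the subgroup $K(\bfT) \le (\ZZ/2\ZZ)^D$ of balanced colorings. The case analysis you flag as the main obstacle is handled in the paper by first contracting bi-colored edges, removing isolated monochromatic cycles, and contracting degree-two vertices, after which the balance condition and the decomposition into valid paths and cycles become immediate.
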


Finally, we state our general cancellation-free rational expression for
$\det \Sigma_{A,B}$.

\begin{cor}[Cancellation-free formula for general digraphs]
\label{cor:CancellationFreeGeneral}
Let $G=(V,D)$, $A$, and $B$ be as in Theorem~\ref{thm:PositivityGeneral}. Then
\[ \det \Sigma_{A,B} =
\frac{\sum_{[\bfT]_\sim \in \calt(A,B)/\sim} \sgn(\bfT) 2^{|\UD(\bfT)|} m_{\bfT}}{
\sum_{[\bfT]_\sim \in \calt(\emptyset,\emptyset)/\sim} \sgn(\bfT) 2^{|\UD(\bfT)|}
m_{\bfT}},
\]
where both run over equivalence classes of the relation $\sim$ defined by
$\bfT \sim \bfT'$ if and only if $m_{\bfT}=m_{\bfT'}$.
\end{cor}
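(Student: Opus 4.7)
The plan is to combine three ingredients: Cauchy--Binet applied to the factorization $\Sigma = (I-\Lambda)^{-T}\Omega(I-\Lambda)^{-1}$, the third author's cancellation-free formula \cite{Talaska12} for minors of $(I-\Lambda)^{-1}$ in terms of self-avoiding flows and cycle covers, and the already-established Theorems~\ref{thm:PositivityGeneral} and~\ref{thm:PowerOfTwoGeneral} which collapse signed sums over $\calt(A,B)$ into sums over $\sim$-equivalence classes weighted by powers of two.

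First, since we have reduced to the case where $\Omega$ is diagonal with entries $\omega_{ii}$, the Cauchy--Binet formula applied to the three-factor product $\Sigma_{A,B} = ((I-\Lambda)^{-T})_{A,V}\,\Omega\,((I-\Lambda)^{-1})_{V,B}$ yields
$$\det \Sigma_{A,B} \;=\; \sum_{\substack{S \subseteq V\\ |S|=k}} \det\!\bigl((I-\Lambda)^{-1}_{S,A}\bigr)\cdot \det\!\bigl((I-\Lambda)^{-1}_{S,B}\bigr)\cdot \prod_{i \in S} \omega_{ii},$$
where I use $((I-\Lambda)^{-T})_{A,S}=((I-\Lambda)^{-1})_{S,A}^T$ to replace the transposed minor by an ordinary one.

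Second, I would invoke Talaska's theorem from \cite{Talaska12} to rewrite each of these minors as
$$\det\!\bigl((I-\Lambda)^{-1}_{S,X}\bigr) \;=\; \frac{\sum_{\bfF} \sgn(\bfF)\, m_\bfF}{\sum_{\bfC} (-1)^{|\bfC|} m_\bfC} \qquad (X\in\{A,B\}),$$
where $\bfF$ runs over self-avoiding flows from $S$ to $X$ and $\bfC$ runs over (possibly empty) collections of pairwise vertex-disjoint self-avoiding cycles in $G$. Substituting and clearing denominators gives $\det \Sigma_{A,B}=N/D$ with
$$N \;=\; \sum_{|S|=k}\sum_{\bfF_L,\bfF_R} \sgn(\bfF_L)\sgn(\bfF_R)\,\Bigl(\prod_{i\in S}\omega_{ii}\Bigr)\,m_{\bfF_L} m_{\bfF_R}, \qquad D \;=\; \Bigl(\sum_{\bfC}(-1)^{|\bfC|} m_\bfC\Bigr)^{\!2}.$$
By the very definition of self-avoiding trek flow in Section~\ref{ssec:General}, $N$ is $\sum_{\bfT \in \calt(A,B)} \sgn(\bfT)\, m_\bfT$: a trek flow is precisely a triple $(S,\bfF_L,\bfF_R)$ with $\bfF_L$ from $S$ to $A$ and $\bfF_R$ from $S$ to $B$, and its sign and monomial agree with the product above. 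Similarly, by the same bookkeeping $D = \sum_{\bfT \in \calt(\emptyset,\emptyset)} \sgn(\bfT)\, m_\bfT$, since an element of $\calt(\emptyset,\emptyset)$ has empty set of tops and is exactly an ordered pair of vertex-disjoint cycle collections with sign $(-1)^{|\bfC_L|+|\bfC_R|}$.

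Finally, I would apply Theorems~\ref{thm:PositivityGeneral} and~\ref{thm:PowerOfTwoGeneral} separately to $N$ and $D$ to group $\sim$-equivalent trek flows: positivity says every element of an equivalence class $[\bfT]_\sim$ contributes with the same sign $\sgn(\bfT)$, and the power-of-two phenomenon says such a class has cardinality $2^{|\UD(\bfT)|}$. Together these produce exactly the numerator and denominator in the displayed formula. I expect the main obstacle to be matching sign conventions: the sign $\sgn(\bfT)=\sgn(\bfF_L)\sgn(\bfF_R)$ is a priori ordering-dependent on the tops $S$, so I would need to verify that a single common ordering on $S$ is used for the two Cauchy--Binet factors so the $S$-dependence cancels, and that the cycle sign $(-1)^{|\bfC|}$ in Talaska's denominator agrees with the cycle contribution to $\sgn(\bfF)$ in her numerator, so that after multiplication nothing is off by an overall $\pm 1$.
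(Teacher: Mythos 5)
Your argument is correct, but it reaches the intermediate identity
$\det \Sigma_{A,B} = \bigl(\sum_{\bfT \in \calt(A,B)} \sgn(\bfT) m_{\bfT}\bigr) / \bigl(\sum_{\bfT \in \calt(\emptyset,\emptyset)} \sgn(\bfT) m_{\bfT}\bigr)$
by a genuinely different route than the paper. The paper avoids Cauchy--Binet entirely: it builds an auxiliary digraph $H$ as the disjoint union of $G^{\opp}$ and $G$ joined by edges $i^{\opp} \to i$ weighted $\omega_{ii}$ (with each $j^{\opp}\to i^{\opp}$ inheriting the weight $\lambda_{ij}$), observes that $\Sigma_{A,B}$ is literally the weighted path matrix $M_{A^{\opp},B}$ of $H$, and applies Theorem~\ref{thm:PathMatrixDet} once; self-avoiding trek flows in $G$ from $A$ to $B$ then correspond, sign-preservingly, to self-avoiding flows in $H$ from $A^{\opp}$ to $B$. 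Your version instead applies Cauchy--Binet to the three-factor product (valid here because $\Omega$ is diagonal after the reduction to graphs without bidirected edges) and invokes Theorem~\ref{thm:PathMatrixDet} twice, once per minor, which also explains why the denominator is a perfect square and why $\calt(\emptyset,\emptyset)$ consists of \emph{pairs} of cycle collections. The two derivations are essentially equivalent---the doubled graph is a combinatorial packaging of exactly this Cauchy--Binet expansion, with the top set $S$ recording which connecting $\omega$-edges are used---but your route surfaces the top-ordering issue explicitly, and you resolve it correctly: the product $\sgn(\bfF_L)\sgn(\bfF_R)$ is independent of the ordering of $S$ provided the same ordering indexes the rows of both minors, which is what Cauchy--Binet supplies and is precisely the convention under which $\sgn(\bfT)$ is well defined in the paper. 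The final step---applying Theorems~\ref{thm:PositivityGeneral} and~\ref{thm:PowerOfTwoGeneral} to collapse each $\sim$-class to a single term with coefficient $\sgn(\bfT)\,2^{|\UD(\bfT)|}$---is identical in both proofs.
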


Note that every self-avoiding trek flow from the empty set to itself is
of the form $((\emptyset,\bfC_L),(\emptyset,\bfC_R))$. In particular,
when $G$ is acyclic and there are no cycles, the denominator contains
only one term, corresponding to $\bfC_L=\bfC_R=\emptyset$, and our formula
specializes to the formula in Corollary~\ref{cor:CancellationFreeAcyclic}.


\section{Determinants of path matrices for cyclic graphs}\label{sec:Kelli}

In this section we recall a generalization of the
Lindstr\"om-Gessel-Viennot Lemma \cite{Lindstrom1973} to arbitrary graphs;
for a different
generalization see \cite{Fomin2001}. Thus let $G=(V,D)$ be an arbitrary finite
directed graph, and associate an edge variable $\lambda_{ij}$ to each
$i \to j \in D$. Let $A=\{a_1,\ldots,a_k\}$ and $B=\{b_1,\ldots,b_k\}$
be $k$-subsets of $V$, and let $M=M_{A,B}$ denote their {\em weighted path
matrix}, i.e., the $k \times k$-matrix over $\rr[[\lambda_{ij} \mid i \to
j \in D]]$ whose $(i,j)$-entry is the sum over all directed paths in $G$
from $a_i$ to $b_j$ of the product of the edge variables along the path.

To express $\det(M)$ as a rational function in the edge variables, we
recall from Section~\ref{sec:Results} the notion of self-avoiding flow
from $A$ to $B$ in $G$, and we write $\calf_{A,B}(G)$ for the (necessarily finite) set of all such flows.

\begin{thm}[\cite{Lalonde1996,Talaska12}]
\label{thm:PathMatrixDet}
The determinant of the weighted path matrix from $A$ to $B$
is given by
\[ \det(M_{A,B})=
\frac{\sum_{\bfF \in \calf_{A,B}(G)} \sgn(\bfF) m_{\bfF}}
{\sum_{\bfC \in \calf_{\emptyset,\emptyset}(G)}
\sgn(\bfC) m_{\bfC}},
\]
where the denominator is the sum over all self-avoiding flows consisting
of vertex-disjoint cycles only.
\end{thm}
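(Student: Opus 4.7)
The plan is to clear denominators and prove the equivalent identity
\[
\det(M_{A,B})\cdot\sum_{\bfC\in\calf_{\emptyset,\emptyset}(G)}\sgn(\bfC)m_\bfC \;=\; \sum_{\bfF\in\calf_{A,B}(G)}\sgn(\bfF)m_\bfF
\]
as formal power series in the edge variables $\lambda_{ij}$. The Leibniz expansion writes the determinant as $\sum_{(\pi,\bfP)}\sgn(\pi)m_\bfP$, where $\pi\in\frs_k$ and $\bfP=(P_1,\ldots,P_k)$ is a $k$-tuple of (possibly self-intersecting) directed paths with $P_i$ running from $a_i$ to $b_{\pi(i)}$. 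Multiplying by the cycle sum, the left-hand side becomes a signed sum over triples $(\pi,\bfP,\bfC)$ of weight $\sgn(\pi)(-1)^{|\bfC|}m_\bfP m_\bfC$, where $\bfC$ is a self-avoiding vertex-disjoint cycle system on $V$ \emph{a priori} unrelated to $\bfP$.

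Call such a triple \emph{good} if every $P_i$ is self-avoiding, the $P_i$ are pairwise vertex-disjoint, and no $P_i$ shares a vertex with any cycle in $\bfC$. Good triples are in sign-preserving bijection with $\calf_{A,B}(G)$ via $(\pi,\bfP,\bfC)\leftrightarrow\bfF=(\bfP,\bfC)$, since $\pi$ is recovered from $\bfP$, and their total contribution matches the right-hand side exactly. What remains is to exhibit a sign-reversing and monomial-preserving involution $\iota$ on the set of \emph{bad} triples, so that their contributions cancel.

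I would construct $\iota$ by fixing a total order on $V$ and on $\{1,\ldots,k\}$ and, given a bad triple, scanning the paths in order $P_1,P_2,\ldots$, each $P_i$ from $a_i$ to $b_{\pi(i)}$, until the first violation is encountered at some vertex $v$. The violation is of one of three types: (a) $v$ has already been visited on the same path $P_i$; (b) $v$ lies on some earlier path $P_j$ with $j<i$; or (c) $v$ lies on some cycle $C\in\bfC$. A local surgery handles each type: in (a), excise the directed loop of $P_i$ between its two visits to $v$ and insert this loop into $\bfC$ as a new cycle; in (b), swap the tails of $P_i$ and $P_j$ after $v$, multiplying $\pi$ by a transposition; in (c), splice $C$ into $P_i$ at $v$ and remove $C$ from $\bfC$. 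In each case the combined monomial $m_\bfP m_\bfC$ is preserved; in (a) and (c) the parity of $|\bfC|$ flips, and in (b) $\sgn(\pi)$ flips, so $\iota$ reverses the overall sign.

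The hard part is verifying that $\iota$ is well-defined and truly involutive. The three cases interact: excising a loop in (a) can yield a cycle that shares a vertex with a cycle already in $\bfC$, and splicing a cycle in (c) can create an additional self-intersection on $P_i$, so the naive moves are not always legal and the priority must fall through. A careful priority convention is required: always act at the violation whose vertex $v$ is smallest in $V$, with ties broken by case type and by the ordering on path labels, so that an (a)-move is inverted by a (c)-move at the same $v$ and a (b)-move is its own inverse. Working out this priority scheme and checking all overlap cases is the combinatorial core of the arguments in \cite{Lalonde1996,Talaska12}, which I would invoke to complete the proof.
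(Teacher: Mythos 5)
First, a point of calibration: the paper does not actually prove Theorem~\ref{thm:PathMatrixDet} --- it is imported wholesale from the cited references --- so there is no internal argument to compare yours against. Your setup is the right first step and matches the combinatorial strategy of those references: after clearing denominators, the good triples $(\pi,\bfP,\bfC)$ are in a sign- and weight-preserving bijection with $\calf_{A,B}(G)$, so the theorem is equivalent to the existence of a weight-preserving, sign-reversing involution on the bad triples, and your three local moves (loop excision, tail swap, cycle splicing) are indeed the correct moves with the correct sign effects.

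The gap is that this involution \emph{is} the theorem, and your proposal does not construct it; it ends by invoking the same references the paper cites. Moreover, the specific priority rule you suggest --- act at the violation whose vertex is smallest in $V$ --- does not survive the moves: excising a loop from $P_i$ destroys some violations and creates new type-(c) violations wherever the new cycle meets later paths, and a tail swap transfers the entire suffix of $P_i$, together with all of its violations, onto the lower-indexed path $P_j$, so the minimal violating vertex of the image triple need not be the one you acted at, and involutivity fails. Workable conventions scan in lexicographic order of (path index, position along the path), which does make loop excision and cycle splicing mutually inverse, but the tail-swap case still needs a separate argument that the selected violation is preserved; carrying this out is precisely the content of the cited combinatorial proof. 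If you want a self-contained argument, a cleaner route is algebraic: observe that $M_{A,B}=((I-\Lambda)^{-1})_{A,B}$ as formal power series, apply Jacobi's complementary-minor identity $\det((C^{-1})_{A,B})=\pm\det(C_{V\setminus B,\,V\setminus A})/\det C$ with $C=I-\Lambda$, and expand both determinants by Leibniz: the nonzero terms of $\det(I-\Lambda)$ correspond exactly to collections of vertex-disjoint self-avoiding cycles with sign $(-1)^{|\bfC|}$, and those of the complementary minor correspond exactly to self-avoiding flows from $A$ to $B$. This reduces the theorem to sign bookkeeping and avoids the involution entirely.
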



\section{Proofs of the main results}\label{sec:Proofs}

In this section we prove Theorems~\ref{thm:PositivityGeneral} and
\ref{thm:PowerOfTwoGeneral} and
Corollary~\ref{cor:CancellationFreeGeneral}, which immediately imply
their acyclic special cases.  Let $G=(V,D)$ be a directed graph and
let $A=\{a_1,\ldots,a_k\}$ and $B=\{b_1,\ldots,b_k\}$ be $k$-subsets of
$V$. We first deduce the corollary from the two theorems.

\begin{proof}[Proof of Corollary~\ref{cor:CancellationFreeGeneral} given
Theorems~\ref{thm:PositivityGeneral} and \ref{thm:PowerOfTwoGeneral}]
We will apply Theorem~\ref{thm:PathMatrixDet} with $G$ replaced by the
digraph $H$ constructed from $G$ as follows: in the disjoint union of the
opposite graph $G^\opp$ with $G$ add an arrow from any vertex $i^\opp$ of
$G^\opp$ to the corresponding vertex $i$ in $G$. These arrows get labels
$\omega_{ii}$, while every edge $j^\opp \to i^\opp$ in $G^{\opp}$ gets the
same label $\lambda_{ij}$ as its opposite $i \to j$ in $G$. With this notation,
the submatrix $\Sigma_{A,B}$ of our graphical model equals the path matrix
$M_{A^\opp,B}$ in Theorem~\ref{thm:PathMatrixDet}. Similarly,
self-avoiding trek flows in $G$ from $A$ to $B$ are in
 bijection with self-avoiding flows in $H$
from $A^\opp$ to $B$, and the bijection preserves signs. Thus Theorem~\ref{thm:PathMatrixDet} yields the
expression
\[ \det \Sigma_{A,B} =
\frac{\sum_{\bfT \in \calt(A,B)} \sgn(\bfT) m_{\bfT}}{
\sum_{\bfT \in \calt(\emptyset,\emptyset)} \sgn(\bfT) m_{\bfT}}.
\]
Theorem~\ref{thm:PositivityGeneral} shows that self-avoiding
trek flows from $A$ to $B$ with the same trek flow monomial have the
same sign, and Theorem~\ref{thm:PowerOfTwoGeneral} counts the number of
such trek flows in each equivalence class. This gives the desired formula
\[ \det \Sigma_{A,B} =
\frac{\sum_{[\bfT]_\sim \in \calt(A,B)/\sim} \sgn(\bfT)
2^{|\UD(\bfT)|} m_{\bfT}}{
\sum_{[\bfT]_\sim \in \calt(\emptyset,\emptyset)/\sim} \sgn(\bfT)
2^{|\UD(\bfT)|}
m_{\bfT}}.
\]
\end{proof}

The remainder of this section focuses on the proofs of
Theorems~\ref{thm:PositivityGeneral} and~\ref{thm:PowerOfTwoGeneral}. The
proofs are closely intertwined and mostly contained in the main body of
the text.

First we want to reduce our arguments to the case where $A$ and $B$ are
disjoint. For this, we first modify $G$ as follows. For each $a_i,b_i$,
introduce copies $a_i',b_i'$ with edges $a_i \to a_i',b_i \to b_i'$
and call the resulting graph $G'$.  Let $A'=\{a_1',\ldots,a_k'\}$
and $B'=\{b_1',\ldots,b_k'\}$ inherit their linear orderings from $A$ and $B$ respectively. Adding the
new edges gives a bijection from self-avoiding trek flows in $G$ from
$A$ to $B$ to self-avoiding trek flows in $G'$ from $A'$ to $B'$. For
Theorem~\ref{thm:PositivityGeneral} we observe that this bijection
preserves signs, and for Theorem~\ref{thm:PowerOfTwoGeneral} we observe
that it also preserves the number of up-down cycles---indeed, the new
edges are not part of any up-down cycle. Thus, we may drop the accents and write
$G,A,B,a_i,b_i$ for $G',A',B',a_i',b_i'$, or in other words, assume $A$ and $B$ are disjoint subsets of $V$.

Next fix a self-avoiding trek flow
$\bfT=(\bfF_L=(\bfP_L,\bfC_L),\bfF_R=(\bfP_R,\bfC_R))$ from $A$ to $B$
with set of tops $S=\{s_1,\ldots,s_k\}$, and choose the linear orderings
such that $\bfP_L$ and $\bfP_R$ connect $s_i,\ i \in [k]$ to $a_i$ and to $b_i$,
respectively, so that the path components of both flows have sign $1$. We
want to show that any self-avoiding trek flow $\bfT'$ from $A$ to $B$
with the same trek flow monomial as $\bfT$ has the same sign as $\bfT$,
and that the number of such self-avoiding trek flows is $2^{|\UD(\bfT)|}$.

To this end, color the edges of $G$ with $0$ if they are in some path or
cycle of $\bfF_L$ and with $1$ if they are used by $\bfF_R$. Delete edges
of $G$ without any color; these are irrelevant for what follows. Then
contract all bi-colored edges in $G$. This may yield some additional
isolated vertices (corresponding to fully contracted cycles all of
whose edges are bi-colored); delete all isolated vertices. Contracting
bi-colored edges and deleting resulting empty cycles and other isolated
vertices gives a bijection between self-avoiding trek flows $\bfT'$ from
$A$ to $B$ in the original graph satisfying $m_{\bfT'}=m_{\bfT}$ and
self-avoiding trek flows in the resulting graph having {\em full support}
(i.e., containing all edges exactly once) and having $S$ as set of
tops. This bijection is sign-preserving because it does not affect the
bijections $A \to B$ and because it deletes cycles in pairs. Also note
that the number of up-down cycles in $\bfT$ does not change, since
up-down cycles by definition do not contain bi-colored edges. Again,
we change notation so that $G$ stands for the new graph.

Next, consider any monochromatic cycle in $G$ that has no vertices in
common with paths or cycles of the other color. This cycle can be moved
freely between the left and right cycle components of a self-avoiding
trek flow with full support without changing the sign or the trek flow monomial.
Moving such a cycle gives a factor $2$ when counting self-avoiding trek flows of
full support with $S$ as set of tops, and this factor is accounted for
by the fact that the cycle counts as an up-down cycle. So it suffices
to prove our theorems for the simplified graph where all such cycles
(edges and vertices) are deleted from $G$. Update $G$ again.

Finally, contract all vertices with exactly one incoming edge and one
outgoing edge (necessarily of the same color). This can now be done
without contracting a monochromatic cycle (since such cycles always
intersect a cycle of the other color after we have removed the
monochromatic cycles from the preceding according to the preceding
paragraph). Again, there is a sign-preserving
bijection, and the number of up-down cycles is invariant.

After these modifications, the vertices in $S$ have exactly two outgoing
edges, one colored $0$ and one colored $1$, and no incoming edges,
the vertices in $A \cup B$ have exactly one incoming edge colored $0$
(for $A$) or $1$ (for $B$) and no outgoing edges, and all other vertices
in $G$ have exactly two incoming edges (of different colors) and exactly
two outgoing edges (again of different colors). The resulting graph,
again denoted $G=(V,D)$, does not have any loops (but of course it may
have directed cycles).

We can recover $\bfT$ from the coloring by taking
for $\bfF_R$ the $0$-colored edges and for $\bfF_L$ the $1$-colored
edges. We may therefore identify $\bfT$ with the $\ZZ/2\ZZ$-coloring.
More generally, self-avoiding trek flows $\bfT'$ from $A$ to $B$ with the
same monomial as $\bfT$ correspond bijectively to edge colorings $D \to
\ZZ/2\ZZ$ with the same properties as $\bfT$, namely: the incoming edges
into $A$ are colored $0$, the incoming edges into $B$ are colored $1$,
the outgoing edges of each vertex in $V - (A \cup B)$ are colored with
distinct colors, and so are the two incoming edges of each vertex in $V
\setminus (S \cup A \cup B)$. We identify such $\bfT'$ with their
colorings. Then
the sum $\bfT'+\bfT$ (modulo $2$) is an edge coloring where the sum of
the colors of the edges entering any vertex is zero, and so is the sum
of the colors leaving any vertex. Conversely, adding a coloring $h$
with these properties to $\bfT$ yields a self-avoiding trek flow $\bfT'$
with the same monomial as $\bfT$.  Since the conditions on $h$ define
a subgroup $K(\bfT)$ of $(\ZZ/2\ZZ)^D$ the number of self-avoiding trek flows
$\bfT'$ with the same monomial as $\bfT$ is a power of $2$. The following
proposition completes the proof of Theorem~\ref{thm:PowerOfTwoGeneral}.

\begin{prop}
To each up-down cycle $C$ in $\bfT$ assign an edge coloring $D \to
\ZZ/2\ZZ$ that colors the edges in $C$ with $1$ and the remaining edges
with $0$. The resulting {\em up-down-colorings} form a basis of $K(\bfT)$
as a vector space over $\ZZ/2\ZZ$.
\end{prop}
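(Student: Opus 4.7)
The plan is to prove in turn the three basis properties: every up-down-coloring is in $K(\bfT)$, distinct up-down-colorings are linearly independent over $\ZZ/2\ZZ$, and they span $K(\bfT)$. I work in the reduced graph $G=(V,D)$ of this section, where every internal vertex has exactly one incoming and one outgoing edge of each color, every $v\in S$ has two outgoing edges of distinct colors and no incoming edges, and every $v\in A$ (respectively $v\in B$) has a single incoming edge colored $0$ (respectively $1$) and no outgoing edges. Under this structure, every $\Gamma$-arrow is either the \emph{in-in} pairing of the two incoming edges at an internal vertex $v$, going from the color-$0$-in edge to the color-$1$-in edge, or the \emph{out-out} pairing of the two outgoing edges at an internal or top vertex $v$, going from the color-$1$-out edge to the color-$0$-out edge. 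Each $\Gamma$-arrow thus encodes exactly one of the equalities $h(e)=h(f)$ imposed at some vertex by membership in $K(\bfT)$.

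For membership, fix an up-down cycle $C$, the edge support of a directed cycle in $\Gamma$. Since every $\Gamma$-vertex has in- and out-degree at most $1$ in $\Gamma$, a directed cycle in $\Gamma$ must enter and leave each of its vertices via its unique in- and out-neighbor. Consequently, at every internal $v$ the two incoming edges of $G$ at $v$ lie in $C$ simultaneously or not at all, and likewise for the two outgoing edges; at a top vertex $v\in S$ the same synchronization holds for the outgoing pair. These are exactly the vertex-sum conditions defining $K(\bfT)$ at those places. At $v\in A$ the unique incoming edge is colored $0$ with head $v$; inspection of the $\Gamma$-arrow rules shows it has out-degree $0$ in $\Gamma$, so it lies on no cycle, giving $\mathbf{1}_C(e)=0$ there; the symmetric argument at $v\in B$ uses in-degree $0$. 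Hence $\mathbf{1}_C\in K(\bfT)$.

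Linear independence is immediate from the already-noted fact that distinct up-down cycles have disjoint edge sets. For spanning, let $h\in K(\bfT)$ and set $D_1=\{e\in D : h(e)=1\}$. Because $h(e)=h(f)$ for every $\Gamma$-arrow $e\to f$, the function $h$ is constant on every weakly connected component of $\Gamma$. Since the in- and out-degrees in $\Gamma$ are at most $1$, each component is a directed cycle or a directed path; a path must begin at a $\Gamma$-vertex of in-degree $0$, necessarily a color-$1$ edge whose head lies in $B$, and end at one of out-degree $0$, necessarily a color-$0$ edge whose head lies in $A$. At both endpoints, the $K(\bfT)$-condition at the associated vertex of $A$ or $B$ (which has only one incoming edge) forces $h=0$, and hence $h$ vanishes on the whole path. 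Therefore $D_1$ is a disjoint union of directed cycle components of $\Gamma$, that is, of up-down cycles of $\bfT$, and $h=\sum_C \mathbf{1}_C$ where the sum runs over the up-down cycles contained in $D_1$.

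The main obstacle is not any single deep step but rather the bookkeeping: one has to verify, from the defining rules of $\Gamma$ together with the special degree structure of the reduced graph, both that every $\Gamma$-arrow corresponds to one of the $K(\bfT)$-equalities and that the only $\Gamma$-vertices missing an in-arrow or an out-arrow are the edges incoming to $A$ and $B$. Once this dictionary between $\Gamma$ and $K(\bfT)$ is in place, the three steps above are essentially routine.
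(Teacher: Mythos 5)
Your proof is correct and follows essentially the same route as the paper's: the paper traces the same alternating in-in/out-out walk through the support of $h$ by hand, peels off one up-down cycle at a time, and gets linear independence from edge-disjointness. Your repackaging via the degree structure of $\Gamma$ (every component is a directed cycle or a directed path, and path components are killed by the conditions at $A$ and $B$) is a more systematic version of that same argument, with the minor merit of making explicit both the membership $\mathbf{1}_C\in K(\bfT)$ and the fact that the traced walk must close up, which the paper leaves implicit.
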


\begin{proof}
Let $h$ be a non-zero element of $K(\bfT)$ and let $x_0 \to y_0$ be an edge
contained in the left part of $\bfT$ that is colored $1$ by $h$. Then
$y_0$ has a unique other incoming edge $x_1 \to y_0$ that must also
be colored $1$ by $h$, and this edge must be in the right part of
$\bfT$. Then $x_1$ has a unique other outgoing edge $x_1 \to y_2$ that
must also be colored $1$ by $h$, and this edge must be in $\bfT_L$, etc.
All these edges are distinct, until you get back to $x_0 \to y_l$. This
gives an up-down cycle whose coloring can be subtracted from $h$
to get a coloring with fewer edges colored $1$. This proves that
up-down colorings span $K(\bfT)$. Since distinct up-down cycles are
edge-disjoint, up-down colorings are also linearly independent.
\end{proof}

To prove that the signs of $\bfT'$ and $\bfT$ are the same, it now
suffices to prove that adding to $\bfT$ a coloring supported on an
up-down cycle does not change the sign. For this we prove Lemma
\ref{lemma:signswitch}.
To do this we first need to give a  definition of signs
of other combinatorial objects, which are used in the proof.

\begin{defn} \label{def:sgndeltaupsilon}
Let $\delta$ be an element of $\frs_k$, let $X,Y$ be subsets of $[k]$ of
the same cardinality, and let $\upsilon$ be a bijection from $[k]-Y$
to $[k]-X$. We define the {\em sign} of the pair $(\delta,\upsilon)$
as follows. Construct a directed bipartite graph $H$ on two copies
$[k]_1,[k]_2$ of $[k]$ with $\delta$ prescribing the arrows from $[k]_1$
to $[k]_2$ (``down'') and $\upsilon$ prescribing the arrows from $[k]_2$
(``up''). The graph $H$ defines a bijection $\pi: X \to Y$ which maps
$x \in X$ to the endpoint of the path in $H$ starting at $X$.  The sign
$\sgn(\delta,\upsilon)$ is defined as the sign of $\pi$ (relative to the
natural linear orderings on $X,Y \subseteq [k]$) times $(-1)$
raised to the number of directed cycles in $H$.
\end{defn}

\begin{lemma}\label{lemma:signswitch}
Let $\upsilon$ be as in the definition. Then
$\sgn(\delta,\upsilon)\sgn(\delta)$ depends only on $\upsilon$ and not
on $\delta \in \frs_k$.
\end{lemma}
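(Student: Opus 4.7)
The plan is to show that both $\sgn(\delta)$ and $\sgn(\delta,\upsilon)$ flip when $\delta$ is right-multiplied by a transposition $\tau=(a\; b) \in \frs_k$. Since transpositions generate $\frs_k$, any two permutations $\delta,\delta'$ differ by a sequence of such right-multiplications, so joint sign-flipping at each step forces $\sgn(\delta,\upsilon)\sgn(\delta)$ to be constant on $\frs_k$. The first identity, $\sgn(\delta\tau)=-\sgn(\delta)$, is standard, so the content of the lemma is the claim $\sgn(\delta\tau,\upsilon)=-\sgn(\delta,\upsilon)$.

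Replacing $\delta$ by $\delta(a\; b)$ leaves $H$ unchanged except that the two outgoing arrows from $a$ and $b$ in $[k]_1$ get swapped: $a \to \delta(a)$ and $b \to \delta(b)$ become $a \to \delta(b)$ and $b \to \delta(a)$. Because every vertex of $[k]_1\setminus X$ has exactly one incoming arrow (via $\upsilon^{-1}$) and every vertex of $[k]_2\setminus Y$ has exactly one outgoing arrow (via $\upsilon$), the graph $H$ is a disjoint union of directed paths from $X$ to $Y$ together with directed cycles, and the local edge-swap only affects the component(s) containing $a$ and $b$.

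I would then proceed by a case analysis on those components. If $a$ and $b$ lie in the same cycle, the swap splits it into two cycles; if they lie in the same path, the swap cuts the path into a shorter path with the same endpoints plus a new cycle; if they lie in distinct cycles, the two cycles merge into one; and if one lies in a cycle and the other in a path, the cycle is absorbed into the path. In each of these four sub-cases, the bijection $\pi\colon X\to Y$ is preserved while the number of cycles in $H$ changes by $\pm 1$, so $\sgn(\delta,\upsilon)=\sgn(\pi)(-1)^c$ flips. Finally, if $a$ and $b$ lie in distinct paths from $x_a$ to $y_a$ and from $x_b$ to $y_b$, the swap braids them into new paths from $x_a$ to $y_b$ and from $x_b$ to $y_a$, leaving the cycle count fixed but composing $\pi$ with the transposition $(x_a\; x_b)$; $\sgn(\delta,\upsilon)$ flips again.

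The main obstacle is the bookkeeping in the case analysis, particularly confirming the degenerate configurations in which $a$ or $b$ is itself a starting vertex of $X$, or the last $[k]_1$-vertex of its path before a terminal vertex in $Y$. In each such configuration, one must identify the new endpoints and the newly formed cycle explicitly, but the qualitative outcome is the same: a single sign flip in $\sgn(\delta,\upsilon)$, matching the sign flip of $\sgn(\delta)$. The product is therefore invariant under right-multiplication by transpositions and hence constant on $\frs_k$, yielding the lemma.
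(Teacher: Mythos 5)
Your proof is correct, but it takes a genuinely different route from the paper's. The paper fixes $\delta$ and inducts on $k-|Y|$, i.e.\ on the size of the domain of $\upsilon$: the base case is $\upsilon$ empty (where $\pi=\delta$ and $H$ has no cycles), and the inductive step adds one upward arrow $y\to x$ and computes the ratio $\sgn(\delta,\upsilon')/\sgn(\delta,\upsilon)$ explicitly as a signed product of differences, checking separately the cases $\pi(x)=y$ and $\pi(x)\neq y$ and finding the same $\delta$-independent value $-(-1)^{|\{i\in X\mid i<x\}|+|\{j\in Y\mid j<y\}|}$ in both. You instead fix $\upsilon$ and vary $\delta$ by right-multiplication by transpositions, exploiting the decomposition of $H$ into vertex-disjoint $X$-to-$Y$ paths and cycles and checking that swapping the two outgoing arrows at $a,b\in[k]_1$ always produces exactly one sign flip in $\sgn(\pi)(-1)^{c}$ --- splitting or merging cycles, absorbing a cycle into a path, or transposing two path endpoints. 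Your five cases are exhaustive and each is verified correctly (including the degenerate subcases where $a$ or $b$ is initial on its path or points directly into $Y$, which you flag but which cause no trouble), so the argument is complete. What your approach buys is a cleaner, purely combinatorial proof that avoids the paper's somewhat delicate Vandermonde-style sign computations; it is essentially the same mechanism as the sign analysis in the Lindstr\"om--Gessel--Viennot involution. What the paper's computation buys is an explicit formula for the sign change at each extension of $\upsilon$ (hence, in principle, for the common value of $\sgn(\delta,\upsilon)\sgn(\delta)$ in terms of $\upsilon$ alone), though that extra information is not used elsewhere in the paper.
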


\begin{proof}
We prove the lemma by induction on $k-|Y|(=k-|X|)$. It is
trivially true if $X$ and $Y$ are both equal to $[k]$: then the
graph $H$ does not have any upward arrows and $\pi=\delta$,
and $\sgn(\delta,\upsilon)=\sgn(\pi)=\sgn(\delta)$, so that
$\sgn(\delta,\upsilon)/\sgn(\delta)$ equals $1$, independently of
$\delta$.

Now suppose that the lemma is true for all $\upsilon: [k]-Y \to [k]-X$.
Pick such an $\upsilon$, let $x \in X$ and $y \in Y$, and let $\upsilon'$
be the extension of $\upsilon$ to $[k]-Y+y$ sending $y$ to $x$. Let
$\delta \in \frs_k$ and construct $H$ and $\pi$ as in the
definition (from
the pair $\delta,\upsilon$).  To analyze how $\sgn(\delta,\upsilon')$
differs from $\sgn(\delta,\upsilon)$ we distinguish two cases:
\begin{enumerate}
\item $\pi(x)=y$: In this case, adding the upward arrow $y \to x$ to $H$
creates an additional cycle in $H$, and this contributes a factor $-1$
to the sign.  Furthermore, the new bijection $\pi':X-x \to Y-y$ is the
restriction of $\pi$ to $X-x$. Hence we have
\[ \sgn(\pi')/\sgn(\pi)=
\sgn\left(\prod_{i \in X-x} (i-x)(\pi(i)-y)\right)=
(-1)^{|\{i \in X \mid i<x\}|+|\{j \in Y \mid j<y\}|}. \]
In conclusion, $\sgn(\delta,\upsilon')/\sgn(\delta,\upsilon)$ is
this latter expression times $-1$ (for the additional cycle
created in $H$).
\item $\pi(x)=z \neq y$: In this case, no new cycle is created when
adding to $H$ the upward arrow $y \to x$. Let $u:=\pi^{-1}(y)$, so that
in the new graph the path starting at $u \in X$ passes through $u,y,x,z$
in that order. Then the new permutation $\pi':X-x \to Y-y$ equals $\pi$
on $X-u-x$ and sends $u$ to $z$. Hence we have
\begin{align*}
\frac{\sgn(\pi')}{\sgn(\pi)}&=
\sgn \left(
\frac{\prod_{i \in X-u-x} (i-u)(\pi(i)-z)}
{(u-x)(y-z)\prod_{i \in X-u-x}[(i-u)(\pi(i)-y)(i-x)(\pi(i)-z)]}
\right)\\
&=
\sgn \left(
\frac{1}
{(u-x)(y-z)\prod_{i \in X-u-x}[(\pi(i)-y)(i-x)]}
\right)\\
&=
-\sgn \left(
\frac{1}
{\prod_{i \in X-x}[(i-x)]
\prod_{i \in X-u}[(\pi(i)-y)]}
\right)\\
&=-(-1)^{|\{i \in X \mid i<x\}|+|\{j \in Y \mid j<y\}|.
}
\end{align*}
\end{enumerate}
We conclude that the change of sign is the same in both cases, and
that it does not depend on $\delta$.
\end{proof}

We will use the following direct consequence of the lemma.

\begin{lemma} \label{lm:delta01}
Fix a natural number $k$. Let $\delta_0,\delta_1$ be
elements of $\frs_k$
and let $\upsilon_0,\upsilon_1$ be bijections between
subsets $[k]-Y_0,[k]-Y_1$ and $[k]-X_0,[k]-X_1$, respectively. Hence
$|X_i|=|Y_i|$ for $i=0,1$; we do not require that
$|X_0|=|X_1|$. Then we have the following identity:
\[
\sgn(\delta_0,\upsilon_0) \cdot \sgn(\delta_1,\upsilon_1)=
\sgn(\delta_1,\upsilon_0) \cdot \sgn(\delta_0,\upsilon_1). \]
\end{lemma}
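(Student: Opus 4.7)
The plan is to derive Lemma \ref{lm:delta01} as an essentially immediate corollary of Lemma \ref{lemma:signswitch}. The point is that Lemma \ref{lemma:signswitch} tells us that for each bijection $\upsilon$ (between complements of equal-size subsets of $[k]$) the ratio $\sgn(\delta,\upsilon)/\sgn(\delta)$ does not depend on the choice of $\delta \in \frs_k$. Denote this common ratio by $f(\upsilon)$, so that by definition
\[ \sgn(\delta,\upsilon) = f(\upsilon)\cdot \sgn(\delta) \quad \text{for every } \delta \in \frs_k. \]

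With this notation, the two sides of the claimed identity factor symmetrically: the left-hand side becomes
\[ \sgn(\delta_0,\upsilon_0)\cdot \sgn(\delta_1,\upsilon_1) = f(\upsilon_0)f(\upsilon_1)\cdot \sgn(\delta_0)\sgn(\delta_1), \]
while the right-hand side becomes
\[ \sgn(\delta_1,\upsilon_0)\cdot \sgn(\delta_0,\upsilon_1) = f(\upsilon_0)f(\upsilon_1)\cdot \sgn(\delta_1)\sgn(\delta_0). \]
These are equal, which is what the lemma asserts. The only small care needed is to note that Lemma \ref{lemma:signswitch} applies independently to $\upsilon_0$ and $\upsilon_1$, even though the underlying subsets $X_0,Y_0$ and $X_1,Y_1$ may have different sizes; this is fine since the quantity $f(\upsilon)$ is defined for \emph{any} bijection between complements of equinumerous subsets of $[k]$, and the identity $\sgn(\delta,\upsilon)=f(\upsilon)\sgn(\delta)$ holds in each case.

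I do not anticipate any real obstacle: the content of the statement is already packaged in Lemma \ref{lemma:signswitch}, and the present lemma is just the observation that a quantity of the form $f(\upsilon)\sgn(\delta)$ produces the same product whether one pairs $(\delta_0,\upsilon_0),(\delta_1,\upsilon_1)$ or $(\delta_1,\upsilon_0),(\delta_0,\upsilon_1)$. So the proof will consist of one line of setup introducing $f(\upsilon)$, one line invoking Lemma \ref{lemma:signswitch}, and the two-line chain of equalities above.
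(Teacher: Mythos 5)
Your proposal is correct and matches the paper's intent exactly: the paper states Lemma~\ref{lm:delta01} as a ``direct consequence'' of Lemma~\ref{lemma:signswitch} without further argument, and the intended deduction is precisely your factorization $\sgn(\delta,\upsilon)=f(\upsilon)\sgn(\delta)$ (using that signs are $\pm1$, so multiplying by $\sgn(\delta)$ is the same as dividing by it), after which both sides of the identity equal $f(\upsilon_0)f(\upsilon_1)\sgn(\delta_0)\sgn(\delta_1)$.
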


Now we return to our proof of positivity; the following arguments are
clarified in Example~\ref{ex:updownswap} below. Suppose that $\bfT'$ and
$\bfT$ differ by an up-down cycle $h$, considered as an edge coloring
as above. Label the vertices with incoming edges of $h$-color $0$ and
outgoing edges of $h$-color $1$ by the numbers $1,\ldots,k$ (we identify
these with a first copy $[k]_1$ of $[k]$), and similarly label the
vertices with outgoing edges of $h$-color $0$ and incoming $h$-color $1$
by $1,\ldots,k$ (identifying them with a second copy $[k]_2$). Note that
there are, indeed, equally many of both, since the edges of $h$-color
$1$ and $\bfT$-color $0$ form $k$ vertex-disjoint paths from the first
set to the second set. Denote the corresponding bijection by $\delta_0
\in \frs_k$. Similarly, the edges of $h$-color $1$ and $\bfT$-color
$1$ give a bijection $\delta_1 \in \frs_k$. Now some directed paths with
$\bfT$-color $0$ emanating from $[k]_2$ (hence with $h$-color $0$) may
``loop back'' to $[k]_1$; this gives an bijection $\upsilon_0$ from a
subset $[k]_2-Y_0$ of $[k]_2$ to a $[k]_1-X_0$. Similarly, we obtain an
injective map $\upsilon_1$ from a subset of $[k]_2-Y_1$ into $[k]_1-X_1$
by following directed paths of $\bfT$-color $1$ emanating from $[k]_2$.

To change $\bfT$ into $\bfT'$ the colors in the up-down cycle are
interchanged. This means that the roles of $\delta_0$ and $\delta_1$
are interchanged. The total change in trek flow sign is exactly the
change from $\sgn(\delta_0,\upsilon_0) \sgn(\delta_1,\upsilon_1)$
into $\sgn(\delta_1,\upsilon_0) \sgn(\delta_0,\upsilon_1)$.
Indeed, the change in sign of the left flow $\bfF_L$ is
exactly $\sgn(\delta_1,\upsilon_0)/\sgn(\delta_0,\upsilon_0)$,
and the change in sign of the right flow $\bfF_R$ is exactly
$\sgn(\delta_0,\upsilon_1)/\sgn(\delta_1,\upsilon_1)$. Now
Lemma~\ref{lm:delta01} implies that that $\bfT$ and $\bfT'$ have the same
sign. This concludes our proof of Positivity for general digraphs. We
conclude the paper with an example illustrating the arguments just given.

\begin{ex} \label{ex:updownswap}
\begin{figure}
\includegraphics[width=\textwidth]{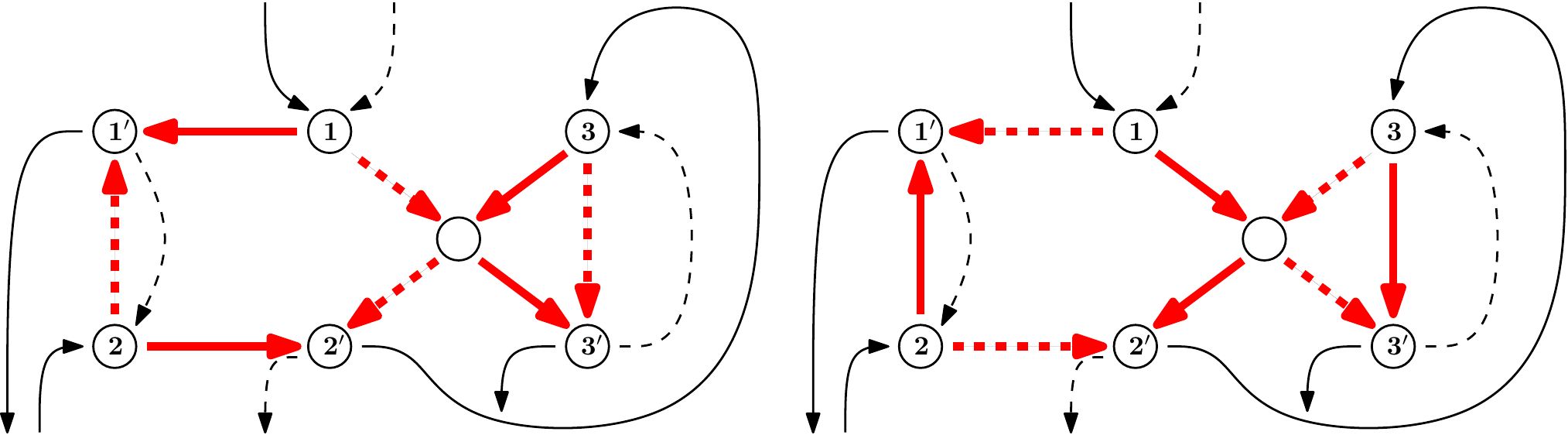}
\caption{Left: a fragment of $\bfT$; and right: the corresponding fragment
of $\bfT'$.}
\label{fig:examplepos}
\end{figure}

In Figure~\ref{fig:examplepos} we have $k=3$, the elements of $[3]_1$
are denoted $1,2,3$, and the elements of $[3]_2$ are denoted $1',2',3'$.
On the left is a fragment of $\bfT$ with $0$-colored edges solid and
$1$-colored edges dashed. On the right is the corresponding fragment
of $\bfT'$.

The bijection $\delta_0$ maps $1 \mapsto 1',2 \mapsto 2',3 \mapsto 3'$
and is depicted by straight solid arrows on the left and by dashed
solid arrows on the right. The bijection $\delta_1$ maps $1 \mapsto 2',2 \mapsto 1',
3 \mapsto 3'$ and is depicted by straight dashed arrows on
the left and by straight solid arrows on the right. Together these form
the up-down cycle. The partial map up $\upsilon_0$ maps $2' \mapsto 3$
(curved solid arrow) and the partial map $\upsilon_1$ maps $1' \mapsto 2,
3' \mapsto 3$ (curved dashed arrows). The remaining arrows are fragments
of paths leading from the set of tops to the up-down cycle and from the
up-down cycle to $A \cup B$.

The map $\pi_0$ constructed from $(\delta_0,\upsilon_0)$ as in
Definition~\ref{def:sgndeltaupsilon} can be read off from the left-hand
picture: it maps $1 \mapsto 1',2 \mapsto 3'$ and has sign $1$; the
number of cycles in the digraph $H_0$ as in that definition is zero. So
$\sgn(\delta_0,\upsilon_0)=1$.  Similarly, $\pi_1$ constructed from
$(\delta_1,\upsilon_1)$ is $1 \mapsto 2'$, with sign $1$, and $H_1$
has two cycles $21'$ and $33'$, so $\sgn(\delta_1,\upsilon_1)=1$, as well.

On the right, the roles of $\delta_0$ and $\delta_1$ are reversed. The
pair $(\delta_1,\upsilon_0)$ leads to no cycles and to the map $1 \mapsto 3', 2 \mapsto 1'$
with sign $-1$. Thus $\sgn(\delta_1,\upsilon_0)=-1$.
Similarly, $(\delta_0,\upsilon_1)$ leads to the map $1 \mapsto 2'$ with
sign $1$ and to one cycle $33'$, so $\sgn(\delta_0,\upsilon_1)=-1$. Note that
$\sgn(\delta_0,\upsilon_0)
\sgn(\delta_1,\upsilon_1)=
\sgn(\delta_1,\upsilon_0)
\sgn(\delta_0,\upsilon_1)$
in accordance with Lemma~\ref{lm:delta01}, and this shows that the sign
of $\bfT$ does not change under swapping the $0$ and $1$-labels in the
up-down cycle, resulting in $\bfT'$.
\end{ex}


\bibliographystyle{plain}

\end{document}